\documentclass[11pt,a4paper]{article}

\usepackage{fullpage}

\bibliographystyle{apalike}
\usepackage[round]{natbib}

\usepackage{amsmath}
\usepackage{amsthm}
\usepackage{ascmac}
\usepackage{amsfonts}
\usepackage[dvipdfmx]{graphicx}
\allowdisplaybreaks[1] 


\newcommand{\dd}{{\mathrm d}}

\newcommand{\argmax}{\mathop{\rm argmax}\limits}
\newcommand{\argmin}{\mathop{\rm argmin}\limits}
\newcommand{\E}{\mathrm{E}}
\newcommand{\esssup}{{\mathrm{ess}\sup}}

\newtheorem{theorem}{Theorem}
\newtheorem{lemma}{Lemma}

\newtheorem{example}{Example}

\newtheorem{remark}{Remark}

\title{Relations Between the Conditional Normalized Maximum Likelihood Distributions and the Latent Information Priors}

\author{Mutsuki KOJIMA${}^*$ and Fumiyasu KOMAKI${}^*{}^{\dagger}$\\ \\
        ${}^*$Department of Mathematical Informatics\\
        Graduate School of Information Science and Technology\\
        The University of Tokyo, Tokyo, Japan\\
        ${}^{\dagger}$RIKEN Brain Science Institute, Wako-shi, Japan\\
        \texttt{\normalsize \{mutsuki\_kojima,komaki\}@mist.i.u-tokyo.ac.jp}}
\date{}

\begin{document}
\maketitle

\begin{abstract}
We reveal the relations between the conditional normalized maximum likelihood (CNML) distributions and Bayesian predictive densities based on the latent information priors (LIPs). In particular, CNML3, which is one type of CNML distributions, is investigated. 
The Bayes projection of a predictive density, which is an information projection of the predictive density
on a set of Bayesian predictive densities, is considered. We prove that the sum of the Bayes projection divergence of CNML3 and the conditional mutual information is asymptotically constant. This result implies that the Bayes projection of CNML3 (BPCNML3) is asymptotically identical to the Bayesian predictive density based on LIP. In addition, under some stronger assumptions, we show that BPCNML3 exactly coincides with the Bayesian predictive density based on LIP.
\end{abstract}

{\bf Keywords:} Bayes projection, conditional mutual information, Kullback--Leibler divergence, least favorable prior, regret, R\'{e}nyi divergence

\section{Introduction}
We construct predictive densities for future variables based on observed data.  Let $(X,\mathcal{F})$ be a measurable space and let $\mathcal{M}=\{p(x|\theta)|x\in X, \theta\in\Theta\subset \mathbf{R}^d\}$ be a statistical model, where $p(x|\theta)$ is the probability density function with respect to a $\sigma$-finite measure $\mu$ on $(X,\mathcal{F})$. We assume that observations $x^N:= (x_1,\ldots ,x_N)^{\top} \in X^N$ and future variables $y^M:= (y_1,\ldots, y_M)^{\top}\in X^M$ are independent and identically distributed random variables with probability distribution $\mathcal{M}$. Thus, the joint probability density function of $x^N$ and $y^M$ is 
\[
p(x^N,y^M|\theta)=\prod_{i=1}^Np(x_i|\theta)\prod_{j=1}^Mp(y_j|\theta).
\]

A predictive density $q(y^M|x^N)$ is a conditional probability density, i.e., a function from $X^N\times X^M$ to $\mathbf{R}_+$ satisfying $\int_{X^M} \dd\mu(y^M) q(y^M|x^N)=1$. The goodness of prediction fit of $q(y^M|x^N)$ is evaluated by the average Kullback--Leibler divergence (simply referred to as {\it KL risk} in this paper) :
\[
R_{\mathrm{KL}}^{N,M}(\theta, q):= \int_{X^N} \dd\mu(x^N) p(x^N|\theta) \int_{X^M} \dd \mu(y^M) p(y^M|\theta) \log\frac{p(y^M|\theta)}{q(y^M|x^N)}.
\]

In information theory, the Bayes risk
\[
R_{\mathrm{KL}}^{N,M}(\pi,p_{\pi}):=\int_{\Theta} \dd\pi (\theta) R_{\mathrm{KL}}^{N,M}(\theta, p_{\pi}),
\]
is called conditional mutual information when $N>0$ \citep{CoverThomas2006}. 
Latent information priors (LIPs) are defined as prior distributions on $\Theta$
that maximize the conditional mutual information, see \cite{Komaki2011}.
Bayesian predictive densities based on LIPs are minimax predictive densities under KL risk when $\mathcal{M}$
is a submodel of the multinomial distribution.
The LIPs are different from Jeffreys priors in general.
In addition, when $N>0$ and the model is a joint location and scale model,
we note that the minimax predictive densities under KL risk do not have to
match the Bayesian predictive densities based on Jeffreys priors as shown by \cite{LiangBarron2004}. 

On the other hand, in the context of information-theoretic learning, the normalized maximum likelihood (NML) distributions, introduced by \cite{Shtarkov1987}, are important predictive densities with no observation ($N=0$). The NML distribution is defined by
\[
q_{\mathrm{NML}}(y^M):=\frac{p(y^M|\hat{\theta}(y^M))}{\int_{X^M} \dd\mu(z^M) p(z^M|\hat{\theta}(z^M))},
\]
where $\hat{\theta}(z^M):=\mathrm{argmax}_{\theta}~p(z^M|\theta)$. \cite{Shtarkov1987} showed that the NML distribution achieves the minimax regret:
\begin{align*}
q_{\mathrm{NML}}=\argmin_{q}\max_{y^M}\{-\log q(y^M)-(-\log p(y^M|\hat{\theta}(y^M)))\}.
\end{align*}
However, NML distributions have a serious problem that the normalizing constants diverge to infinity even if $\mathcal{M}$ is a simple statistical model such as the normal, Poisson, or geometric distribution. To remedy the problem, \cite{Grunwald2007} proposed three types of generalizations of NML distributions called conditional normalized maximum likelihood (CNML) distributions:
\begin{align*}
q_{\mathrm{CNML1}}(y^M|x^N)&:=\frac{p(x^N, y^M|\hat{\theta}(y^M))}{\int_{X^M} \dd\mu(z^M) p(x^N, z^M|\hat{\theta}(z^M))},\\
q_{\mathrm{CNML2}}(y^M|x^N)&:=\frac{p(x^N, y^M|\hat{\theta}(x^N, y^M))}{\int_{X^M} \dd\mu(z^M) p(x^N, z^M|\hat{\theta}(x^N, z^M))},\\
q_{\mathrm{CNML3}}(y^M|x^N)&:=\frac{p(y^M|x^N, \hat{\theta}(x^N, y^M))}{\int_{X^M} \dd\mu(z^M) p(z^M|x^N, \hat{\theta}(x^N, z^M))},
\end{align*}
where $\hat{\theta}(x^N,z^M):=\mathrm{argmax}_{\theta}~p(x^N,z^M|\theta)$.
By conditioning on observations $x^N$, the normalizing constants of CNML distributions do not diverge to infinity, and the distributions are defined as predictive densities with some observations ($N>0$). As with the NML distribution, CNML-$i$ ($i=1,2,3$) achieves the minimax conditional regret-$i$ ($i=1,2,3$):
\begin{align*}
q_{\mathrm{CNML1}}&=\argmin_{q}\max_{y^M}\{-\log q(y^M|x^N)-(-\log p(x^N,y^M|\hat{\theta}(y^M)))\},\\
q_{\mathrm{CNML2}}&=\argmin_{q}\max_{y^M}\{-\log q(y^M|x^N)-(-\log p(x^N,y^M|\hat{\theta}(x^N,y^M)))\},\\
q_{\mathrm{CNML3}}&=\argmin_{q}\max_{y^M}\{-\log q(y^M|x^N)-(-\log p(y^M|x^N,\hat{\theta}(x^N,y^M)))\}.
\end{align*}

Our results are twofold. First, we show that the sum of the Bayes projection divergence of CNML3 and the conditional mutual information is asymptotically constant. The Bayes projection of a predictive density is an information projection,
a generalization of the information projection studied by \cite{Csiszar1975}, of the predictive density
on a set of Bayesian predictive densities (see Section \ref{preliminaries}). Throughout the paper, ``asymptotic'' means that the number of observations, $N$, is fixed, and the number of future variables, $M$, goes to infinity. Roughly speaking, the first result implies that the Bayes projection of CNML3 (BPCNML3) is asymptotically identical to the Bayesian predictive density based on LIP. Second, under some stronger assumptions, we show that the BPCNML3 exactly coincides with the Bayesian predictive density based on LIP. These results indicate that CNML3 is related to LIPs. 

Among CNML distributions, CNML2 has received much attention \citep{KotlowskiGrunwald2011,Hedayati2012a, Hedayati2012b, BartlettEtAl2013,Harremoes2013}, and it has been recognized as the only natural generalization of NML distributions \citep{Grunwald2012}. \cite{Grunwald2007} showed that CNML1 and CNML2 are asymptotically equal to the Bayesian predictive density based on Jeffreys prior. Under some regularity conditions, \cite{Hedayati2012a} showed that CNML2 is identical to the Bayesian predictive density based on Jeffreys prior even when $M$ is finite. Because of the connection with Jeffreys prior, CNML2 is considered to be the most important predictive density among CNML distributions. 

However, we argue that LIPs, not Jeffreys priors, are naturally related to minimax predictive densities under the conditional regret when $N>0$. The reason is as follows. The regret and Kullback--Leibler divergence are widely known to be naturally related in the sense that they are special versions of the R\'{e}nyi divergence \citep{Renyi1961, ErvenHarremoes2014}. Notably, when $N=0$ and statistical model $\mathcal{M}$ is the multinomial distribution, \cite{XieBarron2000} showed that a Bayesian predictive density based on a modification of Jeffreys prior asymptotically achieves the minimax regret. When $N=0$ and the model satisfies some regularity conditions, \cite{ClarkeBarron1994} showed that Jeffreys prior is asymptotically least favorable under KL risk. Roughly speaking, when $N=0$, Bayesian predictive densities based on Jeffreys priors are asymptotically minimax under both the regret and KL risk. In addition, the NML distribution is known to asymptotically coincide with the Bayesian predictive density based on Jeffreys prior \citep{Grunwald2007}. These studies imply that least favorable priors under KL risk are connected with minimax predictive densities under the regret when $N=0$. Therefore, as is the case for $N=0$, we insist that LIPs are naturally related to minimax predictive densities under the conditional regret because LIPs are least favorable priors under KL risk when $N>0$. 

Our results shed light on the connection between LIPs and CNML3. Although CNML2 has received the most attention among CNML distributions, we consider that CNML3, not CNML2, is more in line with the minimax KL risk approach and is the most important predictive density among CNML distributions. Notably, \cite{Grunwald2007} also vaguely suggested that CNML3 is more in line with the minimax KL risk approach (called Liang and Barron's approach \citep{LiangBarron2004} in his book \citep{Grunwald2007}) than CNML1 and CNML2.

The remainder of this paper is organized as follows. In Section \ref{preliminaries}, we define the Bayes projection of predictive densities and review the definition and properties of LIPs. In Section \ref{main_results}, we state the main results. In Section \ref{numerical_experiments}, we confirm that the main results hold for the binomial distributions through numerical experiments. In Section \ref{conclusion}, we conclude our study.

\section{Preliminaries}\label{preliminaries}
Let $K$ be a compact set of $\Theta$ and $\mathcal{P}_K$ be the set of  all probability measures on $\Theta$ whose support sets are contained in $K$. We assume that $\mathcal{P}_K$ is endowed with the weak convergence topology and the corresponding Borel sigma algebra. By the Prokhorov  theorem, $\mathcal{P}_K$ is compact.

\subsection{Bayes Projection of Predictive Densities}
We define the projection of predictive densities on a set of Bayesian predictive densities. Let $D_{K,q}^{N,M}(\pi)$ be a divergence from Bayesian predictive density based on $\pi$ to predictive density $q$:
\[
D_{K,q}^{N,M}(\pi):= \int_{X^N\times X^M} \dd\mu(x^N,y^M) p_{\pi}(x^N,y^M) \log \frac{p_{\pi}(x^N,y^M)}{q(y^M|x^N)p_{\pi}(x^N)},\quad \pi\in \mathcal{P}_{K},
\]
where
\[
p_{\pi}(x^k):=\int_{\Theta} \dd\pi(\theta) p(x^k|\theta) .
\]
Divergence $D_{K,q}^{N,M}$ is convex with respect to $\pi$. Let  $\pi_1$ and $\pi_2$ in $\mathcal{P}_K$ and $w\in (0,1)$. We define $\pi_w:=w\pi_1+(1-w)\pi_2$. By the log sum inequality,
\begin{align*}
p_{\pi_w}&(x^N,y^M) \log \frac{p_{\pi_w}(x^N,y^M)}{q(y^M|x^N)p_{\pi_w}(x^N)}\\
\leq {}& wp_{\pi_1}(x^N,y^M)\log\frac{p_{\pi_1}(x^N,y^M)}{q(y^M|x^N)p_{\pi_1}(x^N)}+(1-w)p_{\pi_2}(x^N,y^M)\log\frac{p_{\pi_2}(x^N,y^M)}{q(y^M|x^N)p_{\pi_2}(x^N)}.
\end{align*}
Therefore,
\begin{align*}
D_{K,q}^{N,M}(w\pi_1+(1-w)\pi_2)\leq wD_{K,q}^{N,M}(\pi_1)+(1-w)D_{K,q}^{N,M}(\pi_2),\quad w \in (0,1).
\end{align*}
Since $\mathcal{P}_K$ is compact, if map $\mathcal{P}_K\ni \pi \mapsto D_{K,q}^{N,M}(\pi)\in \mathbf{R}$ is strictly convex and lower semicontinuous, then there exists unique minimizer $\hat{\pi}^{N,M}_{K,q}\in\mathcal{P}_K$ such that 
\[
D_{K,q}^{N,M}(\hat{\pi}^{N,M}_{K,q})=\inf_{\pi\in\mathcal{P}_K} D_{K,q}^{N,M}(\pi).
\]
We refer to the Bayesian predictive density based on $\hat{\pi}^{N,M}_{K,q}$ as {\it Bayes projection} of $q$. 

\cite{Komaki2011} showed that KL risk of the Bayes projection of $q$ is not larger than that of $q$ if the statistical model is a submodel of the multinomial distribution.

\subsection{Latent Information Priors}
In information theory, the Bayes risk
\[
R_{\mathrm{KL}}^{N,M}(\pi,p_{\pi}):=\int_{\Theta} \dd\pi (\theta) R_{\mathrm{KL}}^{N,M}(\theta, p_{\pi}),
\]
is called mutual information when $N=0$ and conditional mutual information when $N>0$ \citep{CoverThomas2006}.  The conditional mutual information is concave with respect to $\pi\in\mathcal{P}_K$. LIPs are defined as priors that maximize the conditional mutual information:
\[
\hat{\pi}_{K,\mathrm{LIP}}^{N,M}:=\argmax_{\pi\in\mathcal{P}_K} R_{\mathrm{KL}}^{N,M}(\pi,p_{\pi}).
\]
Since $\mathcal{P}_K$ is compact, if map $\mathcal{P}_K\ni \pi \mapsto R_{\mathrm{KL}}^{N,M}(\pi,p_{\pi})\in \mathbf{R}$ is strictly concave and upper semicontinuous, then $\hat{\pi}_{K,\mathrm{LIP}}^{N,M}$ is the unique maximizer.

Because LIPs are the least favorable priors \citep{Ferguson1967}, the Bayesian predictive densities based on LIPs are naturally related to minimax predictive densities under KL risk. Notably, \cite{Komaki2011} showed that Bayesian predictive densities based on LIPs are minimax predictive densities under KL risk when $\mathcal{M}$ is a submodel of the multinomial distribution.

\section{Main Results}\label{main_results}

Before showing the main results, we give basic assumptions and notations.

We assume that a maximum likelihood estimator (MLE) $\hat{\theta}(z^k)\in\Theta$ exists for all $k\in\mathbf{N}$ and $z^k\in X^k$. 
We take a compact set $K$ contained in the interior of $\Theta$ such that $p(z|\theta)$ is strictly positive for all $z\in X$ and $\theta\in K$ and take a positive constant $\delta$ such that $K_{\delta}=\{\tilde{\theta}\in\Theta |\exists{\theta}\in K~\mathrm{s.t.}~|\theta-\tilde{\theta}|\leq \delta\}$ is also contained in the interior of $\Theta$. Here, $|\theta|$ denotes the Euclidean norm. We denote probabilities of events and expectations of random variables by $P_{\theta}(\cdot)$ and $\E_{\theta}(\cdot)$, respectively.

We state conditions and lemmas required to prove the main results. 

\begin{description}
\item[A1.] For all $z\in X$, the log-likelihood function $\log p(z|\theta)$ is Lipschitz continuous on $K_{\delta}$, i.e., there exists a measurable map  $L_{K_{\delta}}:X\to \mathbf{R}_+$ and $1\leq p\leq \infty$ such that for all $\theta_1,\theta_2\in K_{\delta}$
\begin{align*}
\big|\log p(z|\theta_1)- \log p(z|\theta_2)\big|\leq L_{K_{\delta}}(z)|\theta_1-\theta_2|,
\end{align*}
where $L_{K_{\delta}}(\cdot)$ satisfies
\begin{align*}
\sup_{\theta\in K} \int_{X}\dd\mu (z)p(z|\theta) \{ L_{K_{\delta}}(z)\}^p < \infty.
\end{align*}
We define $\{L_{K_{\delta}}\}^{\infty}:=\esssup_{z\in X}L_{K_{\delta}}(z)$.
	
\item[A2.]
\begin{align*}
\lim_{k\to\infty} \sup_{\theta\in K} \int_{X^k} \dd\mu(z^k) p(z^k|\theta) \big|\hat{\theta}(z^k)-\theta\big|^q=0,
\end{align*}
where $q \geq 1$ satisfies $1/p + 1/q=1$ ($q=\infty$ when $p=1$ and $q=1$ when $p=\infty$).
	
\item[A3.] There exists a measurable map  $T_K:X\to \mathbf{R}_+$ and $1 < r \leq \infty$ such that
\[
\sup_{\theta\in K}\{\log p(z|\hat{\theta}(z)) - \log p(z|\theta)\} \leq T_K(z),
\]
and $T_K$ satisfies
\[
\sup_{\theta\in K}\int_X \dd\mu(z) p(z|\theta) \{T_K(z)\}^r  < \infty.
\]
	
\item[A4.]
\[
\lim_{k\to\infty} \sup_{\theta\in K} \bigg|\int_{X^k} \dd\mu(z^k) p(z^k|\theta) \log \frac{p(z^k|\hat{\theta}(z^k))}{p(z^k|\theta)} -\frac{d}{2} \bigg|=0.
\]
	
\item[A5.] There exist constants $C^{N,M}$ that do not depend on $\theta$ such that
\[
\lim_{M\to\infty} \sup_{\theta\in K} \bigg| \int_{X^N} \dd\mu(x^N) p(x^N|\theta) \log \bigg(\int_{X^M} \dd\mu(y^M) p(y^M|\hat{\theta}(x^N,y^M))\bigg) - C^{N,M}\bigg|=0.
\]
\end{description}

\begin{remark}
\upshape
The integrand in condition A4 is known as the likelihood ratio statistic. The likelihood ratio statistic is widely known to converge in distribution to the chi-squared distribution with degrees of freedom $d/2$ under some mild conditions \citep{Wilks1938}. Because the mean of the chi-squared distribution is $d/2$, condition A4 is considered to be satisfied for many regular statistical models. However,  except for \cite{ClarkeBarron1989}, we are not aware of studies about conditions on the $L^1$ convergence of the likelihood ratio statistic.
\end{remark}

\begin{lemma}\label{sufficient_conditions_for_uniform_consistency}
Assume that statistical model $\mathcal{M}$ satisfies condition A2. Then,
\[
\lim_{k\to\infty} \sup_{\theta\in K} P_{\theta}\bigg(\bigg\{\hat{\theta}(z^k)\not\in K_{\delta}\bigg\} \bigg)=0.
\]
\end{lemma}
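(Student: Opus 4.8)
The plan is to reduce the event $\{\hat{\theta}(z^k)\not\in K_{\delta}\}$ to a deviation event for the estimator and then control that deviation by Markov's inequality, using the moment bound supplied by condition A2.

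First I would record a purely geometric containment. Fix $\theta\in K$. By the definition $K_{\delta}=\{\tilde{\theta}\in\Theta\mid \exists\theta'\in K~\mathrm{s.t.}~|\theta'-\tilde{\theta}|\leq\delta\}$, any point lying within distance $\delta$ of some element of $K$ belongs to $K_{\delta}$. In particular, taking $\theta'=\theta\in K$, the inequality $|\hat{\theta}(z^k)-\theta|\leq\delta$ already forces $\hat{\theta}(z^k)\in K_{\delta}$. Contrapositively, for every $\theta\in K$ we have the inclusion of events
\[
\{\hat{\theta}(z^k)\not\in K_{\delta}\}\subseteq\{|\hat{\theta}(z^k)-\theta|>\delta\}.
\]
Here I would also note that both events are measurable because $\hat{\theta}(\cdot)$ is assumed to be a (measurable) MLE, so the probabilities $P_{\theta}(\cdot)$ are well defined.

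Next, in the main regime $1\leq q<\infty$, I would apply Markov's inequality to the nonnegative random variable $|\hat{\theta}(z^k)-\theta|^q$, obtaining
\[
P_{\theta}\big(\hat{\theta}(z^k)\not\in K_{\delta}\big)\leq P_{\theta}\big(|\hat{\theta}(z^k)-\theta|^q>\delta^q\big)\leq\frac{1}{\delta^q}\int_{X^k}\dd\mu(z^k)\,p(z^k|\theta)\,|\hat{\theta}(z^k)-\theta|^q.
\]
Taking the supremum over $\theta\in K$ and then letting $k\to\infty$, condition A2 makes the right-hand side vanish, which is exactly the claimed uniform convergence. I would handle the boundary case $q=\infty$ (arising when $p=1$) separately: there A2 is read with the essential supremum, so $\sup_{\theta\in K}\esssup_{z^k}|\hat{\theta}(z^k)-\theta|\to 0$, and for all sufficiently large $k$ this quantity is below $\delta$, forcing $|\hat{\theta}(z^k)-\theta|<\delta$ almost surely and hence $P_{\theta}(\hat{\theta}(z^k)\not\in K_{\delta})=0$ for every $\theta\in K$.

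I do not anticipate a genuinely hard step: the argument is essentially a uniform Markov bound. The only points requiring a little care are the correct interpretation and separate treatment of the degenerate $q=\infty$ case, and the observation that the geometric inclusion above holds with the \emph{same} $\theta$ that indexes the sampling distribution $P_{\theta}$, so that the moment in A2 is precisely the quantity that bounds the deviation probability uniformly over $K$.
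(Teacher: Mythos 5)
Your proof is correct and takes essentially the same route as the paper: both reduce the event $\{\hat{\theta}(z^k)\not\in K_{\delta}\}$ to the deviation event $\{|\hat{\theta}(z^k)-\theta|>\delta\}$ (valid because the same $\theta\in K$ indexes both the event and the sampling distribution) and then bound its probability via Markov's inequality by the $q$-th moment appearing in condition A2, uniformly over $K$. The only differences are cosmetic: the paper phrases the bound through Markov plus H\"{o}lder as $\delta^{-1}\{\E_{\theta}(|\hat{\theta}(z^k)-\theta|^q)\}^{1/q}$ whereas you apply Markov directly to the $q$-th power to get $\delta^{-q}\E_{\theta}(|\hat{\theta}(z^k)-\theta|^q)$, and your separate treatment of the degenerate case $q=\infty$ is a harmless extra bit of care that the paper leaves implicit.
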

\begin{proof}
By the Markov and H\"{o}lder inequalities, for all $\theta\in K$,
\[
P_{\theta}\bigg(\bigg\{\hat{\theta}(z^k)\not\in K_{\delta}\bigg\} \bigg) \leq
P_{\theta}(|\hat{\theta}(z^k)-\theta|>\delta) \leq \frac{1}{\delta} \bigg\{\E_{\theta}(|\hat{\theta}(z^k)-\theta|^q)\bigg\}^{\frac{1}{q}}.
\]
Since condition A2 is satisfied, the claim is verified. 
\end{proof}

\begin{lemma}\label{lemma_approx_bias_of_loglikelihood}
Assume that conditions A1--A4 are satisfied. Then,
\[
\lim_{M\to\infty} \sup_{\theta\in K} \bigg|\int_{X^N\times X^M} \dd\mu(x^N,y^M)~ p(x^N, y^M|\theta) \log \frac{p(y^M|\theta)}{p(y^M|\hat{\theta}(x^N, y^M))}+\frac{d}{2}\bigg|= 0.
\]
\end{lemma}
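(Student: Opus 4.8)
The plan is to reduce the claim, by a likelihood-ratio decomposition, to condition A4 applied to the combined sample of size $N+M$ plus a remainder contributed by the $N$ fixed observations, and then to show that this remainder is negligible using A1--A3 and Lemma~\ref{sufficient_conditions_for_uniform_consistency}. Writing $\hat{\theta}=\hat{\theta}(x^N,y^M)$ and splitting $\log p(x^N,y^M|\hat{\theta})=\log p(x^N|\hat{\theta})+\log p(y^M|\hat{\theta})$, I would first record the identity
\[
\log\frac{p(y^M|\theta)}{p(y^M|\hat{\theta})}=-\log\frac{p(x^N,y^M|\hat{\theta})}{p(x^N,y^M|\theta)}+\log\frac{p(x^N|\hat{\theta})}{p(x^N|\theta)}.
\]
Integrating against $p(x^N,y^M|\theta)$, the first term converges to $d/2$ uniformly on $K$ by A4 with $k=N+M$ (note $N+M\to\infty$ as $M\to\infty$). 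Hence it remains to prove that $I_M(\theta):=\int p(x^N,y^M|\theta)\log\{p(x^N|\hat{\theta})/p(x^N|\theta)\}\,\dd\mu(x^N,y^M)\to0$ uniformly for $\theta\in K$; this is precisely the statement that the fixed $N$ observations make an asymptotically vanishing contribution to the combined-sample likelihood ratio.

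To control $I_M$, introduce the event $A_M:=\{\hat{\theta}(x^N,y^M)\notin K_{\delta}\}$, which satisfies $\sup_{\theta\in K}P_{\theta}(A_M)\to0$ by Lemma~\ref{sufficient_conditions_for_uniform_consistency}. On $A_M^{\mathrm c}$ both $\theta$ and $\hat{\theta}$ lie in $K_{\delta}$, so A1 gives the pointwise bound $|\log\{p(x^N|\hat{\theta})/p(x^N|\theta)\}|\le\sum_{i=1}^N L_{K_{\delta}}(x_i)\,|\hat{\theta}-\theta|$. Applying H\"{o}lder's inequality with the conjugate pair $(p,q)$, the first factor is bounded uniformly on $K$ because the sum has only the \emph{fixed} number $N$ of terms (A1 and Minkowski), while the second factor $\{\E_{\theta}|\hat{\theta}-\theta|^q\}^{1/q}$ tends to $0$ uniformly by A2 with $k=N+M$; this disposes of the $A_M^{\mathrm c}$ contribution. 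The decisive structural point is that the Lipschitz estimate involves only $N$ summands: the symmetric-looking alternative of comparing $\hat{\theta}(x^N,y^M)$ with $\hat{\theta}(y^M)$ directly would produce $M$ summands of $L_{K_\delta}(y_j)$ against a difference of MLEs of order $1/M$, and so would fail to vanish.

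For the rare event $A_M$ I would treat the two signs separately. For the positive part, optimality of the single-observation MLE gives $\log p(x_i|\hat{\theta})\le\log p(x_i|\hat{\theta}(x_i))$, whence $\log\{p(x^N|\hat{\theta})/p(x^N|\theta)\}\le\sum_{i=1}^N T_K(x_i)$ by A3; H\"{o}lder with the pair $(r,r')$ together with $\sup_{\theta}P_{\theta}(A_M)\to0$ makes this term vanish uniformly, again because only $N$ terms occur. For the negative part, optimality of $\hat{\theta}=\hat{\theta}(x^N,y^M)$ for the combined likelihood yields $\log\{p(x^N|\theta)/p(x^N|\hat{\theta})\}\le\log\{p(y^M|\hat{\theta})/p(y^M|\theta)\}\le Z_M$, where $Z_M:=\log\{p(y^M|\hat{\theta}(y^M))/p(y^M|\theta)\}\ge0$ is the likelihood-ratio statistic of $y^M$. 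Thus it suffices to prove $\sup_{\theta\in K}\E_{\theta}[Z_M\mathbf{1}_{A_M}]\to0$.

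This last step is the main obstacle. The crude estimate $Z_M\le\sum_{j=1}^M T_K(y_j)$ from A3 grows like $M$ and is useless here, since $\sup_\theta P_\theta(A_M)\to0$ carries no rate. Instead I would exploit that $Z_M$ is tight: by the truncation $\E_\theta[Z_M\mathbf{1}_{A_M}]\le c\,P_\theta(A_M)+\E_\theta[Z_M\mathbf{1}_{\{Z_M>c\}}]$, the first term vanishes for each fixed $c$ by Lemma~\ref{sufficient_conditions_for_uniform_consistency}, and the second vanishes as $c\to\infty$ provided the family $\{Z_M\}$ is uniformly integrable, uniformly in $\theta\in K$. Establishing this uniform integrability is the crux: $Z_M\ge0$, its mean converges to $d/2$ by A4 with $k=M$, and it converges in distribution to one half of a chi-squared variable with $d$ degrees of freedom \citep{Wilks1938} (whose mean is $d/2$), so the converse to dominated convergence (convergence in law with convergent means forces uniform integrability) delivers the claim pointwise in $\theta$. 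Carrying this through \emph{uniformly} over the compact $K$ is the technically delicate part, and A3 (with exponent $r>1$) is what supplies the requisite uniform moment control.
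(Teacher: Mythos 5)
Your decomposition, your treatment of the event $\{\hat{\theta}(x^N,y^M)\in K_{\delta}\}$ via A1, H\"{o}lder with $(p,q)$, and A2 at sample size $N+M$, and your bound of the positive part on the complement via A3 all coincide with the paper's proof. But your argument diverges at exactly the point you flag as "the crux," and there the proposal has a genuine gap: you need $\sup_{\theta\in K}\E_{\theta}[Z_M\mathbf{1}_{A_M}]\to 0$, and the uniform (in $\theta$) uniform integrability of the likelihood-ratio statistics $Z_M$ that you invoke is not available under A1--A4. The converse-to-dominated-convergence route requires $Z_M$ to converge in distribution, i.e.\ Wilks' theorem, whose regularity hypotheses (smoothness, nondegenerate Fisher information, etc.) are not among the paper's assumptions --- indeed the paper's Remark explicitly treats A4 as a condition imposed precisely because such $L^1$ statements do not follow from standard theory; and even granting pointwise UI, nothing in A1--A4 upgrades it to uniformity over $K$. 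A3 does not rescue this: it controls single-observation quantities, and summing gives $Z_M\le\sum_{j=1}^M T_K(y_j)$, which grows linearly in $M$, as you yourself note. So the negative part of the remainder on the rare event is left unproven, and it cannot be proven by the tools assumed.

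The paper avoids this issue entirely with a one-line sandwich that makes the lower bound free. Since $\hat{\theta}(y^M)$ maximizes $p(y^M|\cdot)$, one has $p(y^M|\hat{\theta}(y^M))\ge p(y^M|\hat{\theta}(x^N,y^M))$ pointwise, hence with $I_{N,M}(\theta)$ the quantity in the Lemma and $R_k(\theta):=\E_{\theta}\log\{p(z^k|\theta)/p(z^k|\hat{\theta}(z^k))\}+d/2$, one gets $R_M(\theta)\le I_{N,M}(\theta)$ directly; combined with your (and the paper's) upper bound $I_{N,M}(\theta)\le R_{N+M}(\theta)+U_{M,N}$, where $U_{M,N}\to 0$ is the A1/A2/A3 remainder, condition A4 applied at both $k=M$ and $k=N+M$ squeezes $\sup_{\theta\in K}|I_{N,M}(\theta)|\to 0$. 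In other words, the paper never needs to control the sign of $\log\{p(x^N|\hat{\theta}(x^N,y^M))/p(x^N|\theta)\}$ from below at all --- only its positive excursions --- which is precisely the step your uniform-integrability argument was trying, and failing, to supply. Replacing your final paragraph with this monotonicity observation turns your proposal into the paper's proof.
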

\begin{proof}
See Appendix. 
\end{proof}

We state our first result. 

\begin{theorem}\label{asymptotic_evaluation_of_projection_dist_CNML3}
Let $K$ be a compact set that is contained in the interior of  $\Theta$ and assume that $p(z|\theta)$ is strictly positive for all $z\in X$ and $\theta\in K$. Assume also that conditions A1--A5 are satisfied. 
	
Then,
\begin{align}\label{asymptotic_dist_and_KLrisk_from_cnml3_to_Bayes}
\lim_{M\to\infty} \sup_{\pi\in\mathcal{P}_K} \big|D_{K,q_{\mathrm{CNML3}}}^{N,M}(\pi)+R_{\mathrm{KL}}^{N,M}(\pi, p_{\pi})-\tilde{C}^{N,M}\big|=0,
\end{align}
where $\tilde{C}^{N,M}=C^{N,M}-d/2$ that does not depend on the choice of $\pi$.
\end{theorem}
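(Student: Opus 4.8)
The plan is to expand both summands, exploit an exact cancellation that collapses the sum into a single $\theta$-average, and then apply Lemma~\ref{lemma_approx_bias_of_loglikelihood} and condition A5 term by term. First I would rewrite the Bayes projection divergence. Since $p_{\pi}(x^N,y^M)/p_{\pi}(x^N)=p_{\pi}(y^M|x^N)$,
\[
D_{K,q_{\mathrm{CNML3}}}^{N,M}(\pi)=\int_{X^N\times X^M}\dd\mu(x^N,y^M)\,p_{\pi}(x^N,y^M)\log p_{\pi}(y^M|x^N)-\int_{X^N\times X^M}\dd\mu(x^N,y^M)\,p_{\pi}(x^N,y^M)\log q_{\mathrm{CNML3}}(y^M|x^N).
\]
Expanding $R_{\mathrm{KL}}^{N,M}(\pi,p_{\pi})$ and using $\int_{\Theta}\dd\pi(\theta)\,p(x^N,y^M|\theta)=p_{\pi}(x^N,y^M)$ in its second term produces exactly the term $-\int p_{\pi}(x^N,y^M)\log p_{\pi}(y^M|x^N)$, which cancels the first term of $D$ above. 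This cancellation is the crucial observation, and after it the sum collapses to
\[
D_{K,q_{\mathrm{CNML3}}}^{N,M}(\pi)+R_{\mathrm{KL}}^{N,M}(\pi,p_{\pi})=\int_{\Theta}\dd\pi(\theta)\int_{X^N\times X^M}\dd\mu(x^N,y^M)\,p(x^N,y^M|\theta)\log\frac{p(y^M|\theta)}{q_{\mathrm{CNML3}}(y^M|x^N)}.
\]

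Next I would substitute the definition of CNML3. Because the model is i.i.d., $p(y^M|x^N,\hat{\theta})=p(y^M|\hat{\theta})$, so
\[
\log q_{\mathrm{CNML3}}(y^M|x^N)=\log p(y^M|\hat{\theta}(x^N,y^M))-\log\int_{X^M}\dd\mu(z^M)\,p(z^M|\hat{\theta}(x^N,z^M)).
\]
Inserting this splits the inner integrand into two pieces. The first is the likelihood-ratio-type quantity $\log\{p(y^M|\theta)/p(y^M|\hat{\theta}(x^N,y^M))\}$, to which Lemma~\ref{lemma_approx_bias_of_loglikelihood} applies directly, giving that its $p(x^N,y^M|\theta)$-expectation tends to $-d/2$ uniformly over $\theta\in K$. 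The second is the log-normalizer $\log\int_{X^M}\dd\mu(z^M)\,p(z^M|\hat{\theta}(x^N,z^M))$, which does not depend on $y^M$; integrating $y^M$ out leaves $\int_{X^N}\dd\mu(x^N)\,p(x^N|\theta)\log\int_{X^M}\dd\mu(y^M)\,p(y^M|\hat{\theta}(x^N,y^M))$, which by condition A5 tends to $C^{N,M}$ uniformly over $\theta\in K$.

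Finally I would combine the two pieces. Writing $g_M(\theta)$ for the inner $(x^N,y^M)$-integral, the triangle inequality and the two uniform limits give $\sup_{\theta\in K}|g_M(\theta)-\tilde{C}^{N,M}|\to 0$ with $\tilde{C}^{N,M}=C^{N,M}-d/2$. Since each $\pi\in\mathcal{P}_K$ is a probability measure supported in $K$,
\[
\sup_{\pi\in\mathcal{P}_K}\bigl|D_{K,q_{\mathrm{CNML3}}}^{N,M}(\pi)+R_{\mathrm{KL}}^{N,M}(\pi,p_{\pi})-\tilde{C}^{N,M}\bigr|=\sup_{\pi\in\mathcal{P}_K}\Bigl|\int_{\Theta}\dd\pi(\theta)\,\bigl(g_M(\theta)-\tilde{C}^{N,M}\bigr)\Bigr|\leq\sup_{\theta\in K}\bigl|g_M(\theta)-\tilde{C}^{N,M}\bigr|,
\]
and the right-hand bound is independent of $\pi$ and tends to $0$, which is exactly the asserted uniform-in-$\pi$ limit.

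I expect the main obstacle to be not the cancellation itself, which is algebraically clean, but justifying the manipulations rigorously: applying Fubini to interchange the $\pi$-integral with the $(x^N,y^M)$-integral and, more delicately, to split the logarithm of $q_{\mathrm{CNML3}}$ into two separately integrable pieces. One must confirm that each piece is integrable against $p(x^N,y^M|\theta)$ before splitting, so that the uniform limits from Lemma~\ref{lemma_approx_bias_of_loglikelihood} and A5 can be added; the integrability of the likelihood-ratio piece is precisely what conditions A1--A4 secure through Lemma~\ref{lemma_approx_bias_of_loglikelihood}, and that of the normalizer piece is built into A5. Once the reduction and integrability are in place the convergence is immediate, and the uniformity over $\pi$ comes essentially for free because the controlling bound depends only on the supremum over $\theta\in K$.
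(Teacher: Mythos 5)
Your proposal is correct and follows essentially the same route as the paper: the paper likewise reduces $D_{K,q_{\mathrm{CNML3}}}^{N,M}(\pi)+R_{\mathrm{KL}}^{N,M}(\pi,p_{\pi})$ to $\int_{\Theta}\dd\pi(\theta)\int \dd\mu(x^N,y^M)\,p(x^N,y^M|\theta)\log\{p(y^M|\theta)/q_{\mathrm{CNML3}}(y^M|x^N)\}$ and then splits this into the likelihood-ratio term handled by Lemma \ref{lemma_approx_bias_of_loglikelihood} and the log-normalizer term handled by condition A5, with uniformity in $\pi$ obtained exactly as you argue, by bounding the $\pi$-average by the supremum over $\theta\in K$. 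Your closing remarks on Fubini and integrability are a sound extra precaution but do not change the argument.
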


By deforming \eqref{asymptotic_dist_and_KLrisk_from_cnml3_to_Bayes}, we have
\begin{align}\label{deformation_asymptotic_dist_and_KLrisk_from_cnml3_to_Bayes}
D_{K,q_{\mathrm{CNML3}}}^{N,M}(\pi)=-R_{\mathrm{KL}}^{N,M}(\pi, p_{\pi})+\tilde{C}^{N,M}+o(1),
\end{align}
where term $o(1)$ satisfies $\lim_{M\to\infty}\sup_{\pi\in\mathcal{P}_K} |o(1)|=0$.

Asymptotically, in the right-hand side of \eqref{deformation_asymptotic_dist_and_KLrisk_from_cnml3_to_Bayes}, only the first term $R_{\mathrm{KL}}^{N,M}(\pi, p_{\pi})$ depends on the choice of $\pi$. Therefore, the LIP that maximizes $R_{\mathrm{ KL}}^{N,M}(\pi, p_{\pi})$ with respect to $\pi\in\mathcal{P}_K$ asymptotically coincides with the minimizer of the left-hand side of \eqref{deformation_asymptotic_dist_and_KLrisk_from_cnml3_to_Bayes}, i.e., $\hat{\pi}^{N,M}_{K,q_{\mathrm{CNML3}}}$. In other words, roughly speaking, BPCNML3 is asymptotically identical to the Bayesian predictive density based on the LIP. Notably, BPCNML3 is different from CNML3. Later, under some stronger conditions, we will show that BPCNML3 exactly coincides with the Bayesian predictive density based on the LIP even when $M$ is finite (see Theorem \ref{non-asymptotic_evaluation_of_projection_dist_CNML3}).

\begin{proof}[Proof of Theorem \ref{asymptotic_evaluation_of_projection_dist_CNML3}]
\begin{align*}
D_{K,q_{\mathrm{CNML3}}}^{N,M}(\pi)={}&\int_{\Theta\times X^N\times X^M} \dd\pi(\theta) \dd\mu(x^N,y^M) p(x^N,y^M|\theta)\log \frac{p_{\pi}(y^M|x^N)}{q_{\mathrm{CNML3}}(y^M|x^N)}\\
={}&-\int_{\Theta\times X^N\times X^M} \dd\pi(\theta) \dd\mu(x^N,y^M) p(x^N,y^M|\theta)\log \frac{p(y^M|\theta)}{p_{\pi}(y^M|x^N)}\\
&+\int_{\Theta\times X^N\times X^M} \dd\pi(\theta) \dd\mu(x^N,y^M) p(x^N,y^M|\theta)\log \frac{p(y^M|\theta)}{q_{\mathrm{CNML3}}(y^M|x^N)}.
\end{align*}
The first term is $-R_{\mathrm{KL}}^{N,M}(\pi,p_{\pi})$. The second term is decomposed as
\begin{align*}
\int_{\Theta\times X^N\times X^M}& \dd\pi(\theta) \dd\mu(x^N,y^M) p(x^N,y^M|\theta)\log \frac{p(y^M|\theta)}{q_{\mathrm{CNML3}}(y^M|x^N)}\\
={}& \int_{\Theta} \dd\pi(\theta) \bigg\{\int_{X^N\times X^M}\dd\mu(x^N,y^M)
p(x^N,y^M|\theta) \log \frac{p(y^M|\theta)}{p(y^M|\hat{\theta}(x^N,y^M))}+\frac{d}{2}\bigg\}\\
&+ \int_{\Theta} \dd\pi(\theta) \bigg\{\int_{X^N} \dd\mu(x^N)~p(x^N|\theta)
\log \bigg(\int_{X^M} \dd\mu(z^M) p(z^M|\hat{\theta}(x^N,z^M))\bigg)-C^{N,M}\bigg\}\\
&+ C^{N,M}-\frac{d}{2}.
\end{align*}
By Lemma \ref{lemma_approx_bias_of_loglikelihood} and assumption A5, we have
\begin{align*}
\int_{\Theta\times X^N\times X^M} \dd\pi(\theta) \dd\mu(x^N,y^M) p(x^N,y^M|\theta)\log \frac{p(y^M|\theta)}{q_{\mathrm{CNML3}}(y^M|x^N)}=C^{N,M}-\frac{d}{2} + o(1),
\end{align*}
where term $o(1)$ satisfies $\lim_{M\to\infty}\sup_{\pi\in\mathcal{P}_K} |o(1)|=0$. Therefore, the claim is verified. 
\end{proof}

We give some examples that satisfy conditions A1--A5.

\begin{example}[Multinomial Distributions]\label{multinomial_dist}
\upshape
The first example is the multinomial distribution. Let $X=\{0,1,\ldots, d\}$ and $\Theta=\{(p_1,\ldots, p_d)| 0\leq p_i \leq 1~(i=1,\ldots, d), ~\sum_{i=1}^d p_i \leq 1\}$. We take a compact set $K$ that is contained in the interior of $\Theta$:
\[
K\subset \bigg\{\theta=(p_1,\ldots, p_d)|0<p_i<1~(i=1,\ldots, d),~\sum_{i=1}^d p_i <1\bigg\}.
\]
Since $K$ is contained in the interior of $\Theta$, we can find $\delta>0$ such that compact set $K_{\delta}$ is also in the interior of $\Theta$.
	
The probability function is 
\[
p(z|\theta) = \prod_{i=0}^d p_i^{z^{(i)}} , \quad z=(z^{(0)},\ldots, z^{(d)})^{\top}\in \{0,1\}^{d+1}, \quad p_0:=1-\sum_{i=1}^dp_i,
\]
where we identify elements in $X$ with $z=(z^{(0)},\ldots, z^{(d)})^{\top}\in \{0,1\}^{d+1}$ satisfying $\sum_{i=0}^d z^{(i)}=1$. Since there exists a positive constant $c_{K}$ such that $\inf_{\theta\in K}\min_{i=0,1,\ldots,d} p_i \geq c_{K}>0$,
\begin{align*}
\sup_{\theta\in K}\{\log p(z|\hat{\theta}(z)) - \log p(z|\theta)\}\leq \log 1-\inf_{\theta\in K}\log p(z|\theta)\leq -\log c_{K}.
\end{align*}
	
Similarly, there exists a positive constant $c_{K_{\delta}}>0$ such that $\inf_{\theta\in K_{\delta}}\min_{i=0,1,\ldots,d} p_i \geq c_{K_{\delta}}$. By the mean value theorem, for all $\theta_1,\theta_2\in K_{\delta}$ and $z\in X$,
\begin{align*}
\big|\log p(z|\theta_1)-\log p(z|\theta_2)\big| \leq \frac{1}{c_{K_{\delta}}} |\theta_1-\theta_2|.
\end{align*}
Therefore, condition A1 and A3 with any $p\in [1, \infty]$ and $r=\infty$ are satisfied. The MLE of the multinomial distribution is 
\[
\hat{\theta}(z^n)=\bigg(\frac{\sum_{i=1}^n z_i^{(1)}}{n},\ldots, \frac{\sum_{i=1}^n z_i^{(d)}}{n}\bigg),\quad z^n \in X^n,
\]
and the variance of the MLE is
\[
\E_{\theta} [|\hat{\theta}(z^n)-\theta|^2]=\frac{1}{n}\sum_{j=1}^d [p_j(1-p_j)].
\]
Hence, condition A2 with $q=2$ is satisfied. Concerning
conditions A4 and A5, we show two lemmas.
	
\begin{lemma}\label{lemma_multinomial_expectation_of_likelihoodratio}
For the multinomial distributions, condition A4 is satisfied.
\end{lemma}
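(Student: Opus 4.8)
The plan is to reduce condition A4 to the $L^1$ (mean) convergence of the empirical Kullback--Leibler divergence to its chi-squared limit, which is precisely the delicate point flagged in the Remark. For a sample $z^k$ write $n_i=\sum_{j=1}^k z_j^{(i)}$ and $\hat p_i=n_i/k$, so that $\hat\theta(z^k)=(\hat p_1,\dots,\hat p_d)$ and
\[
\log\frac{p(z^k|\hat\theta(z^k))}{p(z^k|\theta)}=\sum_{i=0}^d n_i\log\frac{\hat p_i}{p_i}=k\sum_{i=0}^d\hat p_i\log\frac{\hat p_i}{p_i}=:k\,D(\hat p\,\|\,p).
\]
Thus condition A4 is equivalent to $\lim_{k\to\infty}\sup_{\theta\in K}\big|k\,\E_\theta[D(\hat p\,\|\,p)]-d/2\big|=0$. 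First I would isolate the quadratic part: setting $\epsilon_i:=\hat p_i-p_i$ (so $\sum_i\epsilon_i=0$), a Taylor expansion of $x\mapsto(p_i+x)\log(1+x/p_i)$ gives $D(\hat p\,\|\,p)=\tfrac12\sum_i\epsilon_i^2/p_i+(\text{remainder})$, and using $\mathrm{Var}(\hat p_i)=p_i(1-p_i)/k$ together with $\sum_{i=0}^d(1-p_i)=d$ one gets the exact identity $k\,\E_\theta[\tfrac12\sum_i\epsilon_i^2/p_i]=d/2$ for every $\theta$. Hence the target constant $d/2$ emerges exactly from the quadratic term, and the whole task reduces to showing that every other contribution is negligible uniformly in $\theta\in K$.

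Second I would control the remainder by splitting on the event $A_k=\{\max_i|\epsilon_i|\le\eta\}$ with $\eta=c_K/2$, where $c_K>0$ is the lower bound $\inf_{\theta\in K}\min_i p_i\ge c_K$ noted in the text. On $A_k$ each $p_i+t\epsilon_i\ge c_K/2$ for $t\in[0,1]$, so the Lagrange form of the remainder is bounded by $C\sum_i|\epsilon_i|^3$ with $C$ depending only on $c_K$; since each $n_i$ is $\mathrm{Binomial}(k,p_i)$, the third central moment estimate $\E_\theta[|\epsilon_i|^3]=O(k^{-3/2})$ holds uniformly over $K$, whence $k\,\E_\theta[\mathbf{1}_{A_k}\cdot(\text{remainder})]=O(k^{-1/2})\to0$ uniformly. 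On the complement $A_k^c$ I would use two uniform facts: the pointwise bound $D(\hat p\,\|\,p)\le\log(1/c_K)$ (because $\sum_i\hat p_i\log\hat p_i\le0$ and $-\sum_i\hat p_i\log p_i\le\log(1/c_K)$), and Hoeffding's inequality $P_\theta(A_k^c)\le 2(d+1)e^{-2k\eta^2}$. These give $k\,\E_\theta[\mathbf{1}_{A_k^c}D(\hat p\,\|\,p)]\le k\log(1/c_K)\cdot2(d+1)e^{-2k\eta^2}\to0$ uniformly, and the same exponential bound kills the quadratic-term contribution on $A_k^c$ (where $\tfrac12\sum_i\epsilon_i^2/p_i\le(d+1)/(2c_K)$). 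Adding the three pieces yields $k\,\E_\theta[D(\hat p\,\|\,p)]=d/2+o(1)$ uniformly.

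The main obstacle is exactly the upgrade from convergence in distribution to convergence in mean, uniformly in $\theta$: the large-deviation region $A_k^c$ has only exponentially small probability but is the one place where the Taylor expansion fails and where $k\,D(\hat p\,\|\,p)$ can be of order $k$. The argument works because compactness of $K$ and its separation from the boundary ($p_i\ge c_K>0$) make both the binomial moment bounds and Hoeffding's inequality uniform in $\theta$, so that the exponential tail decay dominates the linear factor $k$. Verifying the uniformity of the $O(k^{-3/2})$ third-moment estimate and of the constants in the Taylor remainder is the only genuinely careful bookkeeping step.
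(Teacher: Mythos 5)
Your proof is correct, but it takes a genuinely different route from the paper's. The paper disposes of the lemma in a few lines by invoking the exact moment expansion of the likelihood-ratio statistic due to \cite{SmithEtAl1981}: $\E_\theta(G_n)=d/2+R_n(\theta)$ with the explicit bound $|R_n(\theta)|\leq \frac{1}{n}\sum_{j=0}^d |(1-p_j)(1-2p_j)|/(6p_j)$, so the only work left is to note that $p_j\geq c_K>0$ on $K$ makes this $O(1/n)$ uniformly. You instead prove the $L^1$ convergence from scratch: you rewrite $G_k=k\,D(\hat p\,\|\,p)$, observe the exact identity $k\,\E_\theta[\tfrac12\sum_i\epsilon_i^2/p_i]=d/2$ (valid since $\sum_{i=0}^d(1-p_i)=d$), control the cubic Taylor remainder on the event $\{\max_i|\epsilon_i|\leq c_K/2\}$ via uniform third-moment bounds $\E_\theta|\epsilon_i|^3=O(k^{-3/2})$, and kill the large-deviation region with Hoeffding's inequality together with the pointwise bounds $D(\hat p\,\|\,p)\leq\log(1/c_K)$ and $\tfrac12\sum_i\epsilon_i^2/p_i\leq(d+1)/(2c_K)$ --- correctly accounting for the quadratic term's own contribution on the bad event, which is the step such arguments most often omit. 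What each approach buys: the paper's citation yields a sharper, fully explicit $O(1/k)$ remainder, while your argument is self-contained (settling at rate $O(k^{-1/2})$ plus exponentially small terms), makes transparent exactly where the separation $p_i\geq c_K$ from the boundary and the compactness of $K$ enter the uniformity, and generalizes more readily to models where an exact moment expansion of Smith et al.'s type is unavailable --- precisely the issue raised in Remark 1 about the scarcity of $L^1$-convergence results for the likelihood-ratio statistic.
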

\begin{proof}
Let $G_n$ be the likelihood ratio statistic:
\[
G_n(z^n;\theta):=\log\frac{p(z^n|\hat{\theta}(z^n))}{p(z^n|\theta)}.
\]
\cite{SmithEtAl1981} showed that for $\theta$ in the interior of $\Theta$,
\[
\E_{\theta}(G_n(z^n;\theta))=\frac{d}{2}+R_n(\theta),
\]
where $R_n$ satisfies
\[
|R_n(\theta)|\leq \sum_{j=0}^d np_j \frac{1}{6n^3p_j^3} \bigg|\E_{\theta}\bigg(\frac{\sum_{i=1}^n{z_i^{(j)}}}{n}-p_j\bigg)^3\bigg|=\frac{1}{n}\sum_{j=0}^d \frac{|(1-p_j)(1-2p_j)|}{6p_j}.
\]
Thus, $\lim_{n\to\infty}\sup_{\theta\in K}|R_n(\theta)|=0$. Consequently, the claim is verified.
\end{proof}
	
\begin{lemma}\label{lemma_multinomial_normconstants_of_CNML3}
For the multinomial distributions, the normalizing constant of CNML3 is independent of $x^N$. 
Therefore, condition A5 is satisfied.
\end{lemma}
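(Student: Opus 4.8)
The plan is to reduce condition A5 to a purely combinatorial statement about the normalizing constant of CNML3 and then to establish that statement by a generating-function identity. Write $n=(n^{(0)},\dots,n^{(d)})$ for the symbol counts of $x^N$, so that $\sum_l n^{(l)}=N$, and let $s=(s^{(0)},\dots,s^{(d)})$ be the counts of $z^M$. Since the multinomial MLE equals the empirical frequency, the $l$-th coordinate of $\hat{\theta}(x^N,z^M)$ is $(n^{(l)}+s^{(l)})/(N+M)$, and with the convention $0^0=1$,
\[
p(z^M\mid x^N,\hat{\theta}(x^N,z^M))=\prod_{l=0}^{d}\left(\frac{n^{(l)}+s^{(l)}}{N+M}\right)^{s^{(l)}}.
\]
Summing over all $z^M\in X^M$ and grouping by the count vector $s$, which is realized by $\binom{M}{s^{(0)},\dots,s^{(d)}}$ sequences, gives the normalizing constant
\[
Z(x^N)=\frac{1}{(N+M)^M}\sum_{\substack{s\ge 0\\ \sum_l s^{(l)}=M}}\binom{M}{s^{(0)},\dots,s^{(d)}}\prod_{l=0}^{d}\big(n^{(l)}+s^{(l)}\big)^{s^{(l)}}.
\]
It therefore suffices to prove that $S(n):=\sum_{s}\binom{M}{s^{(0)},\dots,s^{(d)}}\prod_l\big(n^{(l)}+s^{(l)}\big)^{s^{(l)}}$ depends on $n$ only through $N=\sum_l n^{(l)}$.

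For the key step I would pass to the exponential generating function in a formal variable $x$ marking $M$. The multinomial coefficient combines with $x^M/M!$ so that the sum factorizes across the symbols:
\[
\sum_{M\ge 0}S(n)\frac{x^M}{M!}=\prod_{l=0}^{d} f_{n^{(l)}}(x),\qquad f_a(x):=\sum_{k\ge 0}\frac{(a+k)^k}{k!}\,x^k.
\]
I would then invoke the classical tree-function identity $f_a(x)=e^{aT(x)}/(1-T(x))$, where $T(x)=\sum_{k\ge 1}(k^{k-1}/k!)x^k$ is the tree function satisfying $T=xe^{T}$; this identity follows from Lagrange inversion (equivalently, Abel's binomial theorem). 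Multiplying the $d+1$ factors yields
\[
\sum_{M\ge 0}S(n)\frac{x^M}{M!}=\frac{e^{(\sum_l n^{(l)})T(x)}}{(1-T(x))^{d+1}}=\frac{e^{N T(x)}}{(1-T(x))^{d+1}},
\]
which depends on $n$ only through $N$. Reading off the coefficient of $x^M$ shows that $S(n)$, and hence $Z(x^N)$, equals a constant $Z_{N,M}$ that does not depend on the particular $x^N$ but only on $N$, $M$, and $d$.

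Finally I would deduce condition A5 essentially for free. Setting $C^{N,M}:=\log Z_{N,M}$, for every $\theta$ the inner integral $\int_{X^M}p(z^M\mid\hat{\theta}(x^N,z^M))\,\dd\mu(z^M)=Z_{N,M}$ is constant in $x^N$, so
\[
\int_{X^N}\dd\mu(x^N)\,p(x^N\mid\theta)\log Z_{N,M}=\log Z_{N,M}=C^{N,M}.
\]
The expression inside the absolute value in A5 is therefore identically zero for all $\theta\in K$, so the supremum and the limit over $M$ vanish trivially; in particular A5 holds with the stated constant.

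The only genuine work is the combinatorial independence claim, i.e. the generating-function factorization together with the Abel/tree-function identity, and I expect this to be the main obstacle since everything else is bookkeeping. As a fallback that avoids special-function identities, one can instead prove independence by induction, showing directly that $S(n)$ is invariant when one unit of count is moved from one symbol to another, so that $S(n+e_i-e_j)=S(n)$; because any two count vectors with the same total $N$ are connected by such shifts, this also gives the result. The case $M=1$, where a one-line computation gives $Z(x^N)=(N+d+1)/(N+1)$, serves as a useful sanity check for either route.
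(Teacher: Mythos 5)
Your proof is correct, and it reaches the same combinatorial core as the paper by a genuinely different route. Both arguments reduce the lemma to the claim that $S(n)=\sum_{s}\binom{M}{s^{(0)},\dots,s^{(d)}}\prod_{l}(n^{(l)}+s^{(l)})^{s^{(l)}}$ depends on the observed counts $n$ only through $N=\sum_l n^{(l)}$, and your deduction of A5 (with $C^{N,M}=\log Z_{N,M}$, making the expression inside the absolute value identically zero) matches the paper's. Where you diverge is in how the claim is proved. The paper proceeds by elementary induction: it introduces the polynomial $f^{(2)}_{M,a}(t)=\sum_{i}\binom{M}{i}(t+i)^{i}(M+a-t-i)^{M-i}$, computes $\frac{\dd}{\dd t}f^{(2)}_{m+1,a}(t)=(m+1)\{f^{(2)}_{m,a+1}(t+1)-f^{(2)}_{m,a+1}(t)\}$, which vanishes by the induction hypothesis that $f^{(2)}_{m,a+1}$ is constant, and then handles general $d$ by exploiting the permutation symmetry of the multivariate analogue so that each partial derivative reduces to the binomial case. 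You instead encode all $M$ simultaneously in an exponential generating function, factor it across the $d+1$ symbols, and invoke the classical tree-function identity $\sum_{k\geq 0}(a+k)^{k}x^{k}/k!=e^{aT(x)}/(1-T(x))$ with $T=xe^{T}$ (Lagrange inversion, equivalently Abel's binomial theorem), so the product collapses to $e^{NT(x)}/(1-T(x))^{d+1}$, visibly a function of $N$ and $d$ alone; since everything is a formal power series manipulation, no convergence issues arise. Your route is shorter modulo the classical identity, treats all $d$ uniformly without the symmetry-plus-induction bookkeeping, and makes the structural reason transparent (the generating function sees the counts only through their sum); as a bonus, a second application of Lagrange inversion, $[x^{M}]\,e^{NT}/(1-T)^{d+1}=[t^{M}]\,e^{(N+M)t}/(1-t)^{d}$, yields the closed form $Z_{N,M}=\sum_{j=0}^{M}\binom{d-1+j}{j}\frac{M!}{(M-j)!\,(N+M)^{j}}$, which reproduces your sanity check $(N+d+1)/(N+1)$ at $M=1$. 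The paper's argument buys self-containedness—nothing beyond differentiation of polynomials—and directly establishes the slightly stronger fact that $f^{(2)}_{M,a}(t)$ is constant in a \emph{real} variable $t$ for arbitrary real $a$ (your identity also yields this, since each coefficient identity is polynomial in the parameters, but you do not need it). Finally, note that your fallback—invariance of $S(n)$ under moving one unit of count between symbols—is essentially the discrete counterpart of the paper's derivative computation, whose telescoping difference $f^{(2)}_{m,a+1}(t+1)-f^{(2)}_{m,a+1}(t)$ expresses exactly that shift-invariance.
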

\begin{proof}
See Appendix. 
\end{proof}
	
In conclusion, the multinomial distributions satisfy conditions A1-A5.
\end{example}

\begin{example}[Normal Distributions with Restricted Mean]\label{restricted_normal}
\upshape

We fix positive numbers $a$ and $b$ such that $a>b>0$. Let $\Theta=[-a,a]$ and $K=[-b,b]$. Since $a$ is strictly larger than $b$, we can take a positive constant $\delta$ satisfying $\delta < a-b$ and $K_{\delta}=[-b-\delta,b+\delta]\subset (-a,a)$.
	
We consider the normal distribution with mean $\theta\in\Theta$ and variance $1$. The probability density function is
\[
p(z|\theta)=\frac{1}{\sqrt{2\pi}}\exp\bigg(-\frac{(z-\theta)^2}{2}\bigg),\quad z\in X.
\]
For $\theta_1,\theta_2\in K_{\delta}$,  the log-likelihood function satisfies
\[
|\log p(z|\theta_1)-\log p(z|\theta_2)|\leq (|z|+a)|\theta_1-\theta_2|.
\]
Therefore, condition A1 is satisfied with $p=2$.
	
The MLE is 
\begin{align*}
\hat{\theta}(z^k)=\left\{
\begin{array}{ll}
-a, & \mathrm{if}~ \overline{z^k}<-a, \\
a, &  \mathrm{if}~ \overline{z^k}>a, \\
\overline{z^k}, & \mathrm{otherwise},
\end{array}
\right.
\end{align*}
where $\overline{z^k}:=\sum_{i=1}^kz_i/k$. We denote the probability density function of the one-dimensional normal distribution with mean $\mu$ and variance $\sigma^2$ by $\phi(z;\mu,\sigma^2)$. Since $\overline{z^k}$ is normally distributed with mean $\theta$ and variance $1/k$, 
\begin{align*}
\E_{\theta}&(\hat{\theta}(z^k)-\theta)^2\\
= {}&\int_{-\infty}^{-a} \dd z~(-a-\theta)^2\phi(z;\theta,1/k)+\int_a^{\infty}\dd z~ (a-\theta)^2\phi(z;\theta,1/k)
+\int_{-a}^a \dd z~(z-\theta)^2\phi(z;\theta,1/k) \\
\leq {}& 4a^2 \int_{-\infty}^{\sqrt{k}(-a-\theta)}\dd z~\phi(z;0,1)
+4a^2 \int_{\sqrt{k}(a-\theta)}^{\infty}\dd z~\phi(z;0,1) + \int_{-\infty}^{\infty}\dd z~(z-\theta)^2\phi(z;\theta,1/k) \\
\leq {}& 8a^2\int_{\sqrt{k}(a-b)}^{\infty} \dd z~\frac{z}{\sqrt{k}(a-b)}\phi(z;0,1)
+\frac{1}{k}\\
= {}& \frac{8a^2}{\sqrt{k}(a-b)}\exp\bigg(-\frac{k(a-b)^2}{2}\bigg)+\frac{1}{k}.
\end{align*}
Consequently, we verify that condition A2 with $q=2$ is fulfilled. Next, we verify that condition A3 holds. We have
\begin{align*}
\sup_{\theta\in K}\{\log p(z|\hat{\theta}(z))-\log p(z|\theta)\}
\leq \sup_{\theta\in K} \frac{(z-\theta)^2}{2}
\leq \frac{(z-b)^2}{2}+\frac{(z+b)^2}{2}=z^2+b^2.
\end{align*}
Since moments of all orders exist and they are continuous in $\theta$, condition A3 is satisfied with $r=2$. 
	
Conditions A4 and A5 are also fulfilled, and the proofs are described in Appendix.
	
\begin{lemma}\label{lemma_restricted_normal_condA4}
For this model, condition A4 is satisfied.
\end{lemma}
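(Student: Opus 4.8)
The plan is to reduce everything to the one-dimensional sufficient statistic $\overline{z^k}$ and to exploit the fact that $K=[-b,b]$ is strictly interior to $\Theta=[-a,a]$. First I would put the likelihood ratio statistic in closed form. Expanding the two Gaussian exponents and writing $G_k(z^k;\theta):=\log\{p(z^k|\hat{\theta}(z^k))/p(z^k|\theta)\}$ gives
\[
G_k(z^k;\theta)=\frac{k}{2}\,\big(\hat{\theta}(z^k)-\theta\big)\,\big(2\overline{z^k}-\hat{\theta}(z^k)-\theta\big),
\]
which depends on the data only through $\overline{z^k}$. Since $\overline{z^k}\sim N(\theta,1/k)$, I would introduce the standardized variable $W:=\sqrt{k}(\overline{z^k}-\theta)$, which is $N(0,1)$ for every $\theta$, so that $\E_\theta(G_k)$ becomes a single Gaussian integral whose limits depend on $\theta$ only through $\sqrt{k}(a\mp\theta)$. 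Throughout, $\phi$ and $\Phi$ denote the standard normal density $\phi(\cdot;0,1)$ and its distribution function.

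Next I would split the expectation according to whether the MLE is interior or clipped, i.e.\ over the three regions $\overline{z^k}<-a$, $-a\le\overline{z^k}\le a$, and $\overline{z^k}>a$. On the interior region $\hat{\theta}(z^k)=\overline{z^k}$, and the closed form collapses to $G_k=W^2/2$, so the interior contribution is the truncated second moment $\tfrac12\int_{\sqrt{k}(-a-\theta)}^{\sqrt{k}(a-\theta)}w^2\phi(w)\,\dd w$, which I would compare with the full second moment $\int_{-\infty}^{\infty}w^2\phi(w)\,\dd w=1$. On each boundary region $\hat{\theta}(z^k)=\pm a$, I would substitute $c:=\sqrt{k}(a-\theta)$ (respectively $c:=\sqrt{k}(a+\theta)$); a short computation turns $G_k$ into $-\tfrac{c^2}{2}+cW$ restricted to $\{W>c\}$, and the identities $\int_c^\infty w\phi(w)\,\dd w=\phi(c)$ and $\int_c^\infty\phi(w)\,\dd w=1-\Phi(c)$ reduce each boundary contribution to the explicit expression $c\phi(c)-\tfrac{c^2}{2}\big(1-\Phi(c)\big)$.

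The crux is then the uniform control of the remainders over $\theta\in K$. Because $b<a$, for every $\theta\in[-b,b]$ one has $a-\theta\ge a-b>0$ and $a+\theta\ge a-b>0$, so each integration threshold is bounded below by $c_0:=\sqrt{k}(a-b)$, which tends to $+\infty$ uniformly in $\theta$. Using the standard Gaussian tail bound $1-\Phi(c)\le\phi(c)/c$ together with the monotonicity of $c\mapsto c\phi(c)$ for $c>1$, both boundary contributions are dominated by a multiple of $c_0\phi(c_0)$ and hence vanish uniformly as $k\to\infty$; the identity $\int_c^\infty w^2\phi(w)\,\dd w=c\phi(c)+(1-\Phi(c))$ shows that the difference between the truncated and full second moments is likewise bounded uniformly by a quantity tending to $0$. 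Combining the three pieces yields $\lim_{k\to\infty}\sup_{\theta\in K}|\E_\theta(G_k)-1/2|=0$, which is exactly condition A4 with $d=1$. The main obstacle I anticipate is bookkeeping the uniformity: the integration limits move with $\theta$, so every tail estimate must be phrased in terms of the $\theta$-independent lower bound $c_0$ rather than the $\theta$-dependent thresholds themselves.
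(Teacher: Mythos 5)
Your proposal is correct and takes essentially the same route as the paper: both reduce everything to the sufficient statistic $\overline{z^k}\sim N(\theta,1/k)$, isolate the clipping event $\{\hat{\theta}(z^k)\neq\overline{z^k}\}$, and kill its contribution by uniform Gaussian tail estimates whose thresholds are bounded below by a $\theta$-free quantity of order $\sqrt{k}(a-b)$ (the paper uses $\sqrt{k}\delta$ with $\delta<a-b$). The only difference is bookkeeping: the paper anchors at the exact identity $\E_{\theta}\log\{p(z^k|\overline{z^k})/p(z^k|\theta)\}=1/2$ and bounds the single nonnegative remainder $\tfrac{k}{2}\E_{\theta}\big[(\overline{z^k}-\hat{\theta}(z^k))^2\big]$ over the clipped regions, which is precisely the sum of your ``missing interior tail'' and boundary pieces, via the identity $\tfrac{W^2}{2}-\big(cW-\tfrac{c^2}{2}\big)=\tfrac{(W-c)^2}{2}$.
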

\begin{proof}
See Appendix.
\end{proof}
	
\begin{lemma}\label{lemma_restricted_normal_condA5}
For this model, condition A5 is satisfied.
\end{lemma}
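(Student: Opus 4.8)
The plan is to reduce the $M$-dimensional integral defining the CNML3 normalizing constant to an explicit one-dimensional integral, read off its large-$M$ behaviour, and then control the expectation over $x^N$ uniformly in $\theta$. Write $S_x:=\sum_{i=1}^N x_i$ and $\bar y:=\frac1M\sum_{j=1}^M y_j$, and let $I(x^N):=\int_{X^M}\dd\mu(y^M)\,p(y^M|\hat\theta(x^N,y^M))$ be the quantity inside the logarithm in A5 (here $\mu$ is Lebesgue measure). Since $\hat\theta(x^N,y^M)$ is the mean $\bar w:=(S_x+M\bar y)/(N+M)$ truncated to $[-a,a]$, it depends on $y^M$ only through $\bar y$; moreover, for any real $t$, $\sum_j(y_j-t)^2=Q+M(\bar y-t)^2$ with $Q:=\sum_j(y_j-\bar y)^2$. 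A direct coarea computation shows that the pushforward of $(2\pi)^{-M/2}e^{-Q/2}\,\dd y^M$ under $y^M\mapsto\bar y$ has the constant Lebesgue density $\sqrt{M/(2\pi)}$, so that $\int_{\mathbf{R}^M}f(\bar y)(2\pi)^{-M/2}e^{-Q/2}\,\dd y^M=\sqrt{M/(2\pi)}\int_{\mathbf{R}}f(\bar y)\,\dd\bar y$ for any function $f$ of $\bar y$. Applying this with $f(\bar y)=e^{-\frac M2(\bar y-\hat\theta)^2}$ collapses $I(x^N)$ to a single integral over $\bar y$, which I would split into the middle region $\{\bar w\in[-a,a]\}$, where $\bar y-\bar w=(N\bar y-S_x)/(N+M)$, and the two truncated tails, where $\hat\theta=\pm a$.

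Carrying out the three elementary Gaussian integrals and simplifying, I expect the closed form
\begin{align*}
I(x^N)=1+\frac{M}{N\sqrt{2\pi}}\int_{-(Na+S_x)/\sqrt M}^{(Na-S_x)/\sqrt M}e^{-u^2/2}\,\dd u,
\end{align*}
so that $I(x^N)$ depends on $x^N$ only through $S_x$, and $1\le I(x^N)\le 1+2a\sqrt{M/(2\pi)}=:F_0$ for every $x^N$, because the integration interval has the $S_x$-free length $2Na/\sqrt M$. The key structural feature is that the leading contribution to the inner integral, namely this interval length, does not depend on $S_x$, while the $S_x$-dependence first enters at order $M^{-3/2}$ through the cubic Taylor coefficient of $e^{-u^2/2}$.

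Next I would expand $\log I(x^N)$ for large $M$. Taylor expanding the Gaussian integrand about $0$ gives $I(x^N)=F_0\big(1-\tfrac{N^2a^2+3S_x^2}{6M}+\cdots\big)$, hence $\log I(x^N)=\log F_0-\tfrac{N^2a^2+3S_x^2}{6M}+\cdots$, uniformly for $|S_x|=o(\sqrt M)$. Taking the expectation under $x^N\sim p(\cdot|\theta)$, for which $S_x\sim N(N\theta,N)$ so that $\E_\theta[S_x^2]=N+N^2\theta^2$, produces $\E_\theta[\log I(x^N)]=\log F_0+O(1/M)$, where the $\theta$-dependent part is $-N^2\theta^2/(2M)$ and is uniformly $O(1/M)$ since $\theta^2\le b^2$ on $K$. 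Setting $C^{N,M}:=\log F_0$ then yields $\sup_{\theta\in K}|\E_\theta[\log I(x^N)]-C^{N,M}|\to0$, which is exactly A5.

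The main obstacle is making the expansion of $\log I(x^N)$ and its expectation rigorous uniformly in $\theta$, since the Taylor step is valid only when $S_x=o(\sqrt M)$ whereas $S_x$ ranges over all of $\mathbf{R}$. I would split the expectation at the event $\{|S_x|\le M^{1/4}\}$. On this event $\alpha:=(Na-S_x)/\sqrt M$ and $\beta:=(Na+S_x)/\sqrt M$ are $O(M^{-1/4})$ uniformly, so the Taylor remainder in $\log I(x^N)$ is bounded by a fixed polynomial in $S_x$ times $M^{-3/2}$, whose expectation against the $N(N\theta,N)$ law of $S_x$ is $O(M^{-3/2})$ uniformly in $\theta\in K$ (all moments of $S_x$ are bounded on $K$). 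On the complement I would use the crude two-sided bound $0\le\log I(x^N)\le\log F_0=O(\log M)$ together with the elementary tail estimate $P_\theta(|S_x|>M^{1/4})\le e^{-c\sqrt M}$ for some $c>0$, valid uniformly for $\theta\in K$ once $M^{1/4}>2Nb$, so that the tail contributes $O(\log M\cdot e^{-c\sqrt M})\to0$. Adding the two regions gives the uniform limit and completes the verification of A5.
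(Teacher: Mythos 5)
Your proof is correct, and its first half coincides in substance with the paper's: the pushforward identity you state for $y^M\mapsto\bar y$ is exactly what the paper obtains via the Helmert orthogonal transformation, and your closed form $I(x^N)=1+\frac{M}{N\sqrt{2\pi}}\int_{-(Na+S_x)/\sqrt{M}}^{(Na-S_x)/\sqrt{M}}e^{-u^2/2}\,\dd u$ agrees with the paper's expression $1+\frac{M}{N}\int_{-aN/\sqrt{M}-u/\sqrt{M}}^{aN/\sqrt{M}-u/\sqrt{M}}\phi(v;0,1)\,\dd v$ with $u=S_x$. Where you genuinely diverge is the uniformity argument. The paper recenters the integral by the exact tilting identity $\phi(v-S_x/\sqrt{M};0,1)=\phi(v;0,1)\exp(S_x v/\sqrt{M}-S_x^2/(2M))$ and, since $|v|\le aN/\sqrt{M}$ on the integration range, obtains the nonasymptotic two-sided bound $\bigl|\log I(x^N)-C^{N,M}\bigr|\le aN|S_x|/M+S_x^2/(2M)$ valid for \emph{every} $x^N$, with $C^{N,M}=\log\bigl(1+\frac{M}{N}\int_{-aN/\sqrt{M}}^{aN/\sqrt{M}}\phi(v;0,1)\,\dd v\bigr)$; taking expectations and noting that $\E_{\theta}|S_x|$ and $\E_{\theta}S_x^2$ are uniformly bounded on $K$ finishes the proof with no truncation or Taylor expansion at all. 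Your route --- Taylor expansion of the Gaussian about $0$, truncation at $|S_x|\le M^{1/4}$ with a subgaussian tail bound uniform on $K$, and the crude bound $0\le\log I(x^N)\le\log F_0=O(\log M)$ on the complement --- is sound, and the stated estimates check out (your constant $C^{N,M}=\log F_0$ differs from the paper's choice by $O(1/M)$, which is immaterial since A5 only requires \emph{some} $\theta$-free constants). What your version buys is the explicit leading-order dependence of $\E_{\theta}\log I(x^N)$ on $\theta$, namely the $-(N+N^2\theta^2)/(2M)$ correction, which the paper's bound deliberately coarsens; what it costs is length and the case split, which the paper's pointwise tilting inequality makes unnecessary.
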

\begin{proof}
See Appendix. 
\end{proof}
	
In summary, the one-dimensional normal distributions with restricted mean satisfy conditions A1--A5.
\end{example}

\begin{remark}\label{remark_minimax_regret}
	\upshape
	As we will see later, numerous statistical models, including normal and Weibull distributions, satisfy a stronger condition than A5, i.e., the normalizing constant of CNML3 does not depend on the value of observations $x^N$ (see condition B2 and Theorem \ref{non-asymptotic_evaluation_of_projection_dist_CNML3}). In Example \ref{restricted_normal}, we verify that the one-dimensional normal model with restricted mean satisfies condition A5. However, this model does not satisfy the stronger condition (condition B2) and the normalizing constant of CNML3 does depend on $x^N$. 
	
	The quantity
	\[
	\log \bigg(\int_{X^M} \dd\mu (y^M) p(y^M|\hat{\theta}(x^N,y^M))\bigg)
	\]
	is not only the logarithm of the normalizing constant of CNML3 but also the minimax conditional regret-3 when we observe $x^N$ and predict $M$ future variables. Intuitively speaking, if the statistical model has ``uniformity'' such as group structure (for example location-scale models), the conditional regret-3 is equal irrespective of the observations. Even when the uniformity is not equipped with the model such as Example \ref{restricted_normal}, condition A5 is considered to hold because the information of future variables $y^M$ increases as $M$ goes to infinity and therefore the effect of $x^N$ on the conditional regret decreases.
\end{remark}

\begin{example}[Normal Distributions with Unknown Means]\label{normal_dist}
\upshape
The third example is the normal distribution with unknown means. Let $X=\mathbf{R}^d$ and $\Theta=\mathbf{R}^d$. We take a compact subset $K$ of $\Theta$ and fix a positive number $\delta>0$.

We consider a normal distribution with mean $\theta=(\theta^{(1)},\ldots,\theta^{(d)})\in\Theta$ and covariance matrix $\sigma^2 I_{d}$. Here, $\sigma^2>0$ is a known parameter, and $I_d$ is the $d\times d$ identity matrix. The probability density function is
\[
p(z|\theta)=\frac{1}{(2\pi\sigma^2)^{\frac{d}{2}}}\exp\bigg(-\frac{\sum_{i=1}^d(z^{(i)}-\theta^{(i)})^2}{2\sigma^2}\bigg),\quad z=(z^{(1)},\ldots,z^{(d)})\in\mathbf{R}^d.
\]
For any compact set $\tilde{K}\subset \mathbf{R}^d$, there exist $\theta_{\mathrm{min},\tilde{K}}^{(i)}:=\mathrm {min}_{\theta\in \tilde{K}} \theta^{(i)}$ and $\theta_{\mathrm{max},\tilde{K}}^{(i)}:=\mathrm{max}_{\theta\in \tilde{K}}, \theta^{(i)}$. For $\theta_1,\theta_2\in K_{\delta}$,  the log-likelihood function satisfies
\[
|\log p(z|\theta_1)-\log p(z|\theta_2)|\leq \sum_{i=1}^d \bigg(\frac{1}{\sigma^2}|z^{(i)}|+\frac{|\theta_{\mathrm{max},K_{\delta}}^{(i)}|+|\theta_{\mathrm{min},K_{\delta} }^{(i)} | }{2\sigma^2}\bigg)|\theta_1-\theta_2|.
\]
Therefore, condition A1 is satisfied with $p=2$. The MLE is the sample mean and its variance is $\E_{\theta}[|\hat{\theta}(z^k)-\theta|^2]=d\sigma^2/k$. Thus, condition A2 is satisfied with $q=2$. We have
\begin{align*}
\sup_{\theta\in K}\{\log p(z|\hat{\theta}(z))-\log p(z|\theta)\}
&= \sup_{\theta\in K} \sum_{i=1}^d \frac{(z^{(i)}-\theta^{(i)})^2}{2\sigma^2}\\
&\leq \sum_{i=1}^d \bigg\{\frac{(z^{(i)}-\theta_{\mathrm{min},K}^{(i)})^2}{2\sigma^2}+\frac{(z^{(i)}-\theta_{\mathrm {max},K}^{(i)})^2}{2\sigma^2}\bigg\}.
\end{align*}
Because moments of all orders exist and are continuous in $\theta$, condition A3 is satisfied with $r=2$.  
	
Since for any $\theta\in\Theta$ and for all $j=1,\ldots,d$,
\[
\E_{\theta} \bigg\{-\frac{1}{2\sigma^2}\sum_{i=1}^k \bigg(z_i^{(j)}-\frac{1}{k}\sum_{l=1}^k z_l^{(j)}\bigg)^2+\frac{1}{2\sigma^2}\sum_{i=1}^k (z_i^{(j)}-\theta)^2\bigg\}=\frac{1}{2},
\]
condition A4 is satisfied.
	
Finally, we show that condition A5 holds. Let $\overline{x^{(i)}}:=\sum_{j=1}^Nx_j^{(i)}/N$ and $\overline{y^{(i)}}:=\sum_{j=1}^My_j^{(i)}/M$. By the translation invariance of the Lebesgue measure,
\begin{align*}
\int_{\mathbf{R}^{dM}} & \dd y^M p(y^M|\hat{\theta}(x^N,y^M))\\
={}&\int_{\mathbf{R}^{dM}} \dd y^M \frac{1}{(2\pi\sigma^2)^{\frac{dM}{2}}}\exp\bigg(-\frac{1}{2\sigma^2}\sum_{i=1}^d\sum_{j=1}^M\bigg(y_j^{(i)}-\frac{N\overline{x^{(i)}}+M\overline{y^{(i)}}}{N+M}\bigg)^2\bigg)\\
={}&\int_{\mathbf{R}^{dM}} \dd z^M \frac{1}{(2\pi\sigma^2)^{\frac{dM}{2}}}\exp\bigg(-\frac{1}{2\sigma^2}\sum_{i=1}^d \sum_{j=1}^M\bigg(z_j^{(i)}-\frac{\sum_{k=1}^Mz_k^{(i)}}{N+M}\bigg)^2\bigg),
\end{align*}
where $z_j^{(i)}:=y_j^{(i)}-\sum_{k=1}^Nx_k^{(i)}/N$. Therefore the normalizing constant of CNML3 does not depend on $x^N$, and thus condition A5 is satisfied. In summary, the normal distributions satisfy conditions A1--A5.
\end{example}

\begin{example}[Exponential Distributions]\label{exp_dist}
\upshape
The fourth example is the exponential distribution. Let $X=(0,\infty)$ and $\Theta=(0,\infty)$. We take a compact set $K$ that is contained in $\Theta$. We fix a positive constant $\delta$ such that $\inf_{\theta\in K_{\delta}} \theta >0$. We define $\theta_{\mathrm{min}, K}:=\mathrm{min}_{\theta\in K}\theta > 0$, $\theta_{\mathrm{max}, K}:=\mathrm{max}_{\theta\in K}\theta <\infty$ and $\theta_{\mathrm{min}, K_{\delta}}:=\mathrm{min}_{\theta\in K_{\delta}}\theta > 0$.
	
The probability density function is 
\[
p(z|\theta) = \theta \exp(-\theta z),\quad z\in X,\quad \theta\in\Theta,
\]
and by the mean value theorem, for all $\theta_1,\theta_2\in K_{\delta}$,
\[
|\log p(z|\theta_1)-\log p(z|\theta_2)|\leq \bigg(\frac{1}{\theta_{\mathrm{min},K_{\delta}}}+z\bigg)|\theta_1-\theta_2|.
\]
Therefore, condition A1 with $p=2$ is satisfied. Condition A3 with $r=2$ is also satisfied because
\[
\\
\sup_{\theta\in K}\{\log p(z|\hat{\theta}(z))-\log p(z|\theta)\}\leq -\log z+|\log \theta_{\mathrm{min},K}|+|\log \theta_{\mathrm{max},K}|+|\theta_{\mathrm{max},K}|z,
\]
and
\[
\sup_{\theta\in K}\E_{\theta}[z^2] <\infty,\quad \sup_{\theta\in K}\E_{\theta}[(\log z)^2]<\infty.
\]
The MLE is $\hat{\theta}(z^k)=k/\sum_{i=1}^k z_i$ and $\sum_{i=1}^k z_i$ follows the gamma distribution with mean $k/\theta$ and variance $k/\theta^2$. Therefore,
\[
\E_{\theta}[|\hat{\theta}(z^k)-\theta|^2]=\theta^k \int_0^{\infty} \dd u \bigg(\frac{k}{u}-\theta\bigg)^2\frac{u^{k-1}e^{-\theta u}}{\Gamma (k)}=\frac{2(k+1)}{(k-1)(k-2)}\theta^2,
\]
and condition A2 is satisfied with $q=2$ because $0<\theta_{\mathrm{min},K}\leq \theta\leq \theta_{\mathrm{max},K}<\infty$ for all $\theta\in K$. Next, we verify that condition A4 holds.
\begin{align*}
\int_{X^k} \dd\mu(z^k) p(z^k|\theta) \log \frac{p(z^k|\hat{\theta}(z^k))}{p(z^k|\theta)}
&=k\log k-k-k\log \theta-k\E_{\theta}\bigg[\log \sum_{i=1}^k z_i\bigg]+\theta\E_{\theta}\bigg[\sum_{i=1}^k z_i\bigg]\\
&=k\log k-k\log \theta - k\int_0^{\infty} \dd u \log u \frac{\theta^k u^{k-1}}{\Gamma (k)}e^{-\theta u}\\
&=k\log k-k\log \theta -k(\psi(k)-\log\theta)\\
&=k(\log k - \psi(k)),
\end{align*}
where $\psi$ is the digamma function \citep{GradshteynRyzhik2007}. The digamma function is represented as
\[
\psi(k)=\log k -\frac{1}{2k} - 2\int_0^{\infty} \dd u\frac{u}{(u^2+k^2)(\exp(2\pi u)-1)}.
\]
Since $k^2\leq u^2+k^2$, 
\[
0\leq \int_0^{\infty} \dd u\frac{u}{(u^2+k^2)(\exp(2\pi u)-1)}\leq \frac{1}{k^2}\int_0^{\infty} \dd u\frac{u}{\exp(2\pi u)-1}=\frac{1}{24k^2}.
\]
Therefore,
\[
\lim_{k\to\infty}k(\log k - \psi(k))=\frac{1}{2},
\]
and thus, condition A4 is satisfied. Finally, we show that condition A5 holds. Let $\bar{x}=\sum_{i=1}^N x_i/N$ and $\bar{y}=\sum_{i=1}^M y_i/M$. The normalizing constant of CNML3 is
\begin{align*}
\int_{X^M} \dd\mu(y^M) p(y^M|\hat{\theta}(x^N,y^M))=\int_{X^M} \dd y^M \bigg(\frac{N+M}{N\bar{x}+M\bar{y}}\bigg)^M\exp\bigg(-\frac{N+M}{N\bar{x}+M\bar{y}}\sum_{i=1}^M y_i\bigg).
\end{align*}
Let $z_i=My_i/(N\bar{x})$ and $\bar{z}=\sum_{i=1}^Mz_i/M$. Then,
\[
\int_{X^M} \dd\mu(y^M) p(y^M|\hat{\theta}(x^N,y^M))=\int_{X^M} \dd z^M \bigg(\frac{N+M}{M+M\bar{z}}\bigg)^M\exp\bigg(-\frac{N+M}{M+M\bar{z}}\sum_{i=1}^M z_i\bigg).
\]
This is independent of $x^N$. In conclusion, the exponential distributions satisfy conditions A1--A5.
\end{example}

Thus far, we have considered asymptotic situations, but next, we provide a non-asymptotic result. We state conditions for the result.

\begin{description}
\item[B1.] For all $\theta\in K$, and for all $N$ and $M$,
\[
\int_{X^N\times X^M}\dd\mu(x^N, y^M)p(x^N,y^M|\theta) \log\frac{p(y^M|\theta)}{p(y^M|\hat{\theta}(x^N,y^M))}
\]
does not depend on $\theta$.

\item[B2.] For all $\theta\in K$, and for all $N$ and $M$,
\[
\log \bigg(\int_{X^M} \dd\mu(y^M) p(y^M|\hat{\theta}(x^N,y^M))\bigg)
\]
does not depend on $x^N$.
\end{description}

\begin{theorem}\label{non-asymptotic_evaluation_of_projection_dist_CNML3}
Let $K$ be a compact set that is contained in the interior of  $\Theta$ and assume that $p(z|\theta)$ is strictly positive for all $z\in X$ and $\theta\in K$. Assume also that conditions B1 and B2 are satisfied. 
	
Then, for any $\pi\in\mathcal{P}_K$ and for all $N$ and $M$,
\begin{align}\label{non-asymptotic_formula_of_projection_dist_CNML3}
D_{K,q_{\mathrm{CNML3}}}^{N,M}(\pi)+R_{\mathrm{KL}}^{N,M}(\pi, p_{\pi})=C_*^{N,M},
\end{align}
where $C_*^{N,M}$ is a constant that is independent of $\pi$. Therefore, BPCNML3 exactly coincides with the Bayesian predictive density based on the LIP.
\end{theorem}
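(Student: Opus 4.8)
The plan is to run the same decomposition used in the proof of Theorem~\ref{asymptotic_evaluation_of_projection_dist_CNML3}, but to replace the two asymptotic inputs (Lemma~\ref{lemma_approx_bias_of_loglikelihood} and assumption A5) by the exact identities B1 and B2. First I would write
\[
D_{K,q_{\mathrm{CNML3}}}^{N,M}(\pi)=-R_{\mathrm{KL}}^{N,M}(\pi,p_{\pi})+S(\pi),
\]
where $S(\pi)$ denotes the second term
\[
S(\pi):=\int_{\Theta\times X^N\times X^M}\dd\pi(\theta)\,\dd\mu(x^N,y^M)\,p(x^N,y^M|\theta)\log\frac{p(y^M|\theta)}{q_{\mathrm{CNML3}}(y^M|x^N)},
\]
exactly as in Theorem~\ref{asymptotic_evaluation_of_projection_dist_CNML3}. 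Since the first term is $-R_{\mathrm{KL}}^{N,M}(\pi,p_{\pi})$, it then suffices to show that $S(\pi)$ does not depend on $\pi$.

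Next I would exploit the i.i.d.\ structure. Because $p(x^N,y^M|\theta)=p(x^N|\theta)p(y^M|\theta)$, conditioning on $x^N$ leaves the density of $y^M$ given $\theta$ unchanged, so the numerator of $q_{\mathrm{CNML3}}$ equals $p(y^M|\hat{\theta}(x^N,y^M))$ and
\[
\log\frac{p(y^M|\theta)}{q_{\mathrm{CNML3}}(y^M|x^N)}=\log\frac{p(y^M|\theta)}{p(y^M|\hat{\theta}(x^N,y^M))}+\log\bigg(\int_{X^M}\dd\mu(z^M)\,p(z^M|\hat{\theta}(x^N,z^M))\bigg).
\]
Substituting this into $S(\pi)$ splits it into two pieces. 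In the first piece the inner integral over $(x^N,y^M)$ is, by condition B1, a constant independent of $\theta$; integrating a $\theta$-constant against the probability measure $\pi$ returns that same constant. In the second piece the log-normalizing constant is, by condition B2, independent of $x^N$; integrating it against $p(x^N|\theta)$ over $X^N$ yields the constant times $1$, and integrating against $\pi$ again returns the constant. Hence each piece is $\pi$-free, so $S(\pi)=C_*^{N,M}$ for some constant depending only on $N$ and $M$, which is \eqref{non-asymptotic_formula_of_projection_dist_CNML3}.

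Since the right-hand side of \eqref{non-asymptotic_formula_of_projection_dist_CNML3} is constant in $\pi$, the minimizer of $D_{K,q_{\mathrm{CNML3}}}^{N,M}$ coincides with the maximizer of $R_{\mathrm{KL}}^{N,M}(\pi,p_{\pi})$, i.e.\ with the LIP, so BPCNML3 equals the Bayesian predictive density based on the LIP. I do not anticipate a genuine obstacle here: conditions B1 and B2 are precisely the exact-equality strengthenings of the $o(1)$ statements supplied by Lemma~\ref{lemma_approx_bias_of_loglikelihood} and assumption A5 in the asymptotic proof, and once the log-ratio is split as above the argument is a direct application of Fubini's theorem. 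The only point requiring care is bookkeeping the two normalizations, $\int_{\Theta}\dd\pi(\theta)=1$ and $\int_{X^N}\dd\mu(x^N)\,p(x^N|\theta)=1$, so that both pieces collapse to genuine $\pi$-independent constants.
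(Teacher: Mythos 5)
Your proof is correct and takes essentially the same route as the paper: the paper's proof likewise writes $D_{K,q_{\mathrm{CNML3}}}^{N,M}(\pi)+R_{\mathrm{KL}}^{N,M}(\pi,p_{\pi})$ as the sum of two $\pi$-integrals, the expectation of $\log\bigl(p(y^M|\theta)/p(y^M|\hat{\theta}(x^N,y^M))\bigr)$ and the expectation of the log-normalizing constant of CNML3, and concludes constancy in $\pi$ immediately from B1 and B2. Your splitting of $S(\pi)$ and the observation that the CNML3 numerator reduces to $p(y^M|\hat{\theta}(x^N,y^M))$ by conditional independence given $\theta$ are exactly the (implicit) steps in the paper's argument, so there is no gap.
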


\begin{proof}
The left-hand side of \eqref{non-asymptotic_formula_of_projection_dist_CNML3} is
\begin{align*}
D_{K,q_{\mathrm{CNML3}}}^{N,M}&(\pi)+R_{\mathrm{KL}}^{N,M}(\pi, p_{\pi})\\
={}&\int_{\Theta}\pi(\dd\theta)\bigg\{\int_{X^N\times X^M}\dd\mu(x^N, y^M)
p(x^N,y^M|\theta) \log\frac{p(y^M|\theta)}{p(y^M|\hat{\theta}(x^N,y^M))}\bigg\}\\
&+\int_{\Theta}\pi(\dd\theta)\bigg\{ \int_{X^N} \dd\mu(x^N) p(x^N|\theta) \log \bigg(\int_{X^M} \dd\mu(y^M) p(y^M|\hat{\theta}(x^N,y^M))\bigg)\bigg\}.
\end{align*}
By assumptions B1 and B2, the claim is verified.
\end{proof}

\begin{example}[One-Dimensional Normal Distribution with Unknown Mean]
\upshape
In Example \ref{normal_dist}, we show that the normal distribution satisfies condition B2. Here, we verify that condition B1 holds. Assume that $x^N$ and $y^M$ are independent and identically normally distributed with unknown mean $\theta$ and variance $1$. Let $\bar{x}:=\sum_{i=1}^Nx_i/N$ and let $\bar{y}:=\sum_{i=1}^M y_i/M$. Then,
\begin{align*}
\int_{X^N\times X^M}&\dd\mu(x^N, y^M)p(x^N,y^M|\theta) \log\frac{p(y^M|\theta)}{p(y^M|\hat{\theta}(x^N,y^M))}\\
={}&\E_{\theta}
\sum_{i=1}^M\frac{1}{2}\bigg\{-(y_i-\theta)^2+\bigg(y_i-\frac{N\bar{x}+M\bar{y}}{N+M}\bigg)^2\bigg\}\\
={}&-\frac{M}{2}+\frac{M(1+\theta^2)}{2}-\frac{NM\theta^2+M(1+M\theta^2)}{N+M}+\frac{M\theta^2}{2}+\frac{M}{2(N+M)}\\
={}&-\frac{M}{2(N+M)}.
\end{align*}
Condition B1 is satisfied, and thus, Theorem \ref{non-asymptotic_evaluation_of_projection_dist_CNML3} holds.
\end{example}

\begin{example}[Weibull Distribution with Unknown Scale Parameter]\label{Weibull_dist}
\upshape
Let $X=(0,\infty)$ and $\Theta=(0,\infty)$. We consider the Weibull distribution with unknown scale parameter $\theta\in\Theta$ and known shape parameter $k\in (0,\infty)$. The Weibull distributions are widely known to include numerous other probability distributions, such as the exponential distributions ($k=1$) and the Rayleigh distributions ($k=2$).
	
The probability density function is
\[
p(z|\theta)=\frac{k}{\theta}\bigg(\frac{z}{\theta}\bigg)^{k-1}\exp\bigg\{-\bigg(\frac{x}{\theta}\bigg)^k\bigg\},\quad z\in X.
\]
The MLE is 
\[
\hat{\theta}(z^n)=\bigg(\frac{1}{n}\sum_{i=1}^n z_i^k \bigg)^{\frac{1}{k}}.
\]
First we show that condition B1 is satisfied. We have
\begin{align*}
\int_{X^N\times X^M}&\dd\mu(x^N, y^M)p(x^N,y^M|\theta) \log\frac{p(y^M|\theta)}{p(y^M|\hat{\theta}(x^N,y^M))}\\
={}&\E_{\theta}\sum_{i=1}^M\bigg\{-k\log \theta
+k\log \hat{\theta}(x^N,y^M)-\frac{y_i^k}{\theta^k}+\frac{y_i^k}{(\hat{\theta}(x^N,y^M))^k}\bigg\}\\
={}&\E_{\theta}\bigg\{M\log \Bigg(\sum_{i=1}^N\frac{x_i^k}{\theta^k}
+\sum_{i=1}^M\frac{y_i^k}{\theta^k}\bigg)-\sum_{i=1}^M\frac{y_i^k}{\theta^k}+\frac{(N+M)\sum_{i=1}^M \frac{y_i^k}{\theta^k}}
{\sum_{i=1}^N\frac{x_i^k}{\theta^k}+\sum_{i=1}^M\frac{y_i^k}{\theta^k}}\Bigg\}-M\log (N+M).
\end{align*}
If a random variable $Z$ follows the Weibull distribution with scale parameter $\theta$ and shape parameter $k$, then $(Z/\theta)^k$ follows the exponential distribution with mean $1$. In addition, if two random variables $Z_1$ and $Z_2$ follow the gamma distributions with common scale parameter $\xi$ and shape parameters $\alpha$ and $\beta$, respectively, then $Z_1/(Z_1+Z_2)$ follows the beta distribution with shape parameters $\alpha$ and $\beta$. From these facts and the reproductive property of the gamma distribution,
\begin{align*}
\int_{X^N\times X^M} & \dd\mu(x^N, y^M)p(x^N,y^M|\theta) \log\frac{p(y^M|\theta)}{p(y^M|\hat{\theta}(x^N,y^M))}\\
={}&M\psi(N+M)-M+(N+M)\times \frac{M}{N+M}-M\log (N+M)\\
={}&M(\psi(N+M)-\log (N+M)),
\end{align*}
where $\psi$ is the digamma function. Hence, condition B1 is fulfilled. Because we can verify that condition B2 holds in the same manner as Example \ref{exp_dist}, we omit the proof.
\end{example}

\section{Numerical Experiments}\label{numerical_experiments}
In Example \ref{multinomial_dist}, we verify that the multinomial distribution satisfies condition A1--A5 and thus, Theorem \ref{asymptotic_dist_and_KLrisk_from_cnml3_to_Bayes} holds. In this section, we confirm the validity of Theorem \ref{asymptotic_dist_and_KLrisk_from_cnml3_to_Bayes} for the binomial distribution through numerical experiments. 

We explain the settings of the numerical experiments. Let $\Theta=[0,1]$ and $K=[0.1,0.9]$. Since $\mathcal{P}_K$ is infinite-dimensional space, we approximate $\mathcal{P}_K$ by the set of discrete distributions $\tilde{P}_K^{100}$:
\[
\tilde{P}_K^{100}:=\bigg\{\sum_{i=0}^{100} \pi_i \delta_{0.1+0.08i}(\dd\theta) \bigg| 0\leq \pi_i \leq 1~\mathrm{ for}~\mathrm{all}~i,~\sum_{i=0}^{100}\pi_i=1.\bigg\},
\]
where $\delta_{a}(\dd\theta)$ denotes the Dirac measure with support $a\in\Theta$. By numerical optimization, we calculate the approximation of the LIP
\begin{align*}
\tilde{\pi}_{K,\mathrm{LIP}}^{N,M}
:={}&\argmax_{\pi\in\tilde{\mathcal{P}}_K}R_{\mathrm{KL}}^{N,M}(\pi,p_{\pi})\\
={}&\argmax_{\pi\in\tilde{\mathcal{P}}_K} \sum_{i,j,k}\pi_i
\begin{pmatrix}
N\\
j
\end{pmatrix}
\begin{pmatrix}
M\\
k
\end{pmatrix}
\theta_i^{j+k}(1-\theta_i)^{N+M-j-k} \log \frac{\theta_i^k(1-\theta_i)^{M-k}p_{\pi}(j)}{p_{\pi}(j,k)},
\end{align*}
and BPCNML3
\begin{align*}
\tilde{\pi}_{K,q_{\mathrm{CNML3}}}^{N,M}
:={}&\argmin_{\pi\in\tilde{\mathcal{P}}_K}D_{K,q_{\mathrm{CNML3}}}^{N,M}(\pi)\\
={}&\argmin_{\pi\in\tilde{\mathcal{P}}_K} \sum_{i,j,k}\pi_i
\begin{pmatrix}
N\\
j
\end{pmatrix}
\begin{pmatrix}
M\\
k
\end{pmatrix}
\theta_i^{j+k}(1-\theta_i)^{N+M-j-k} \log \frac{p_{\pi}(j,k)}{(\hat{\theta}_{j,k})^k(1-\hat{\theta}_{j,k})^{M-k}p_{\pi}(j)},
\end{align*}
where $\theta_i:=0.1+0.08i$, $\hat{\theta}_{j,k}:=(j+k)/(N+M)$, $p_{\pi}(j):=\sum_{i=0}^{100}\pi_i \theta_i^j(1-\theta_i)^{N-j}$ and $p_{\pi}(j,k):=\sum_{i=0}^{100}\pi_i \theta_i^{j+k}(1-\theta_i)^{N+M-j-k}$. We used the free software R \citep{R2009} and constrOptim function for the optimization.

\begin{figure}[tbp]
\centering
\begin{minipage}{0.49\hsize}
	\includegraphics[width=7.5cm]{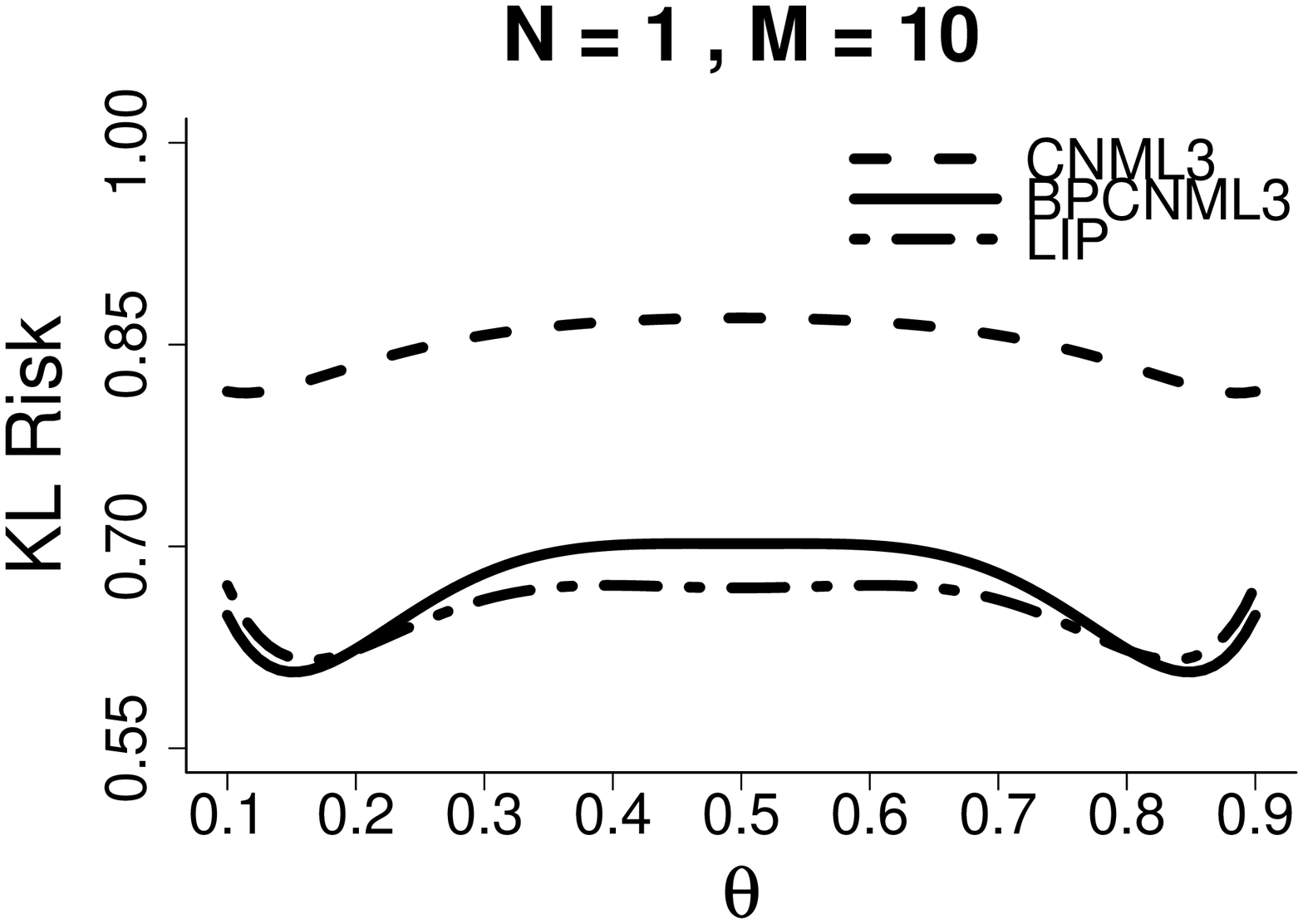}
\end{minipage}
\begin{minipage}{0.49\hsize}
	\includegraphics[width=7.5cm]{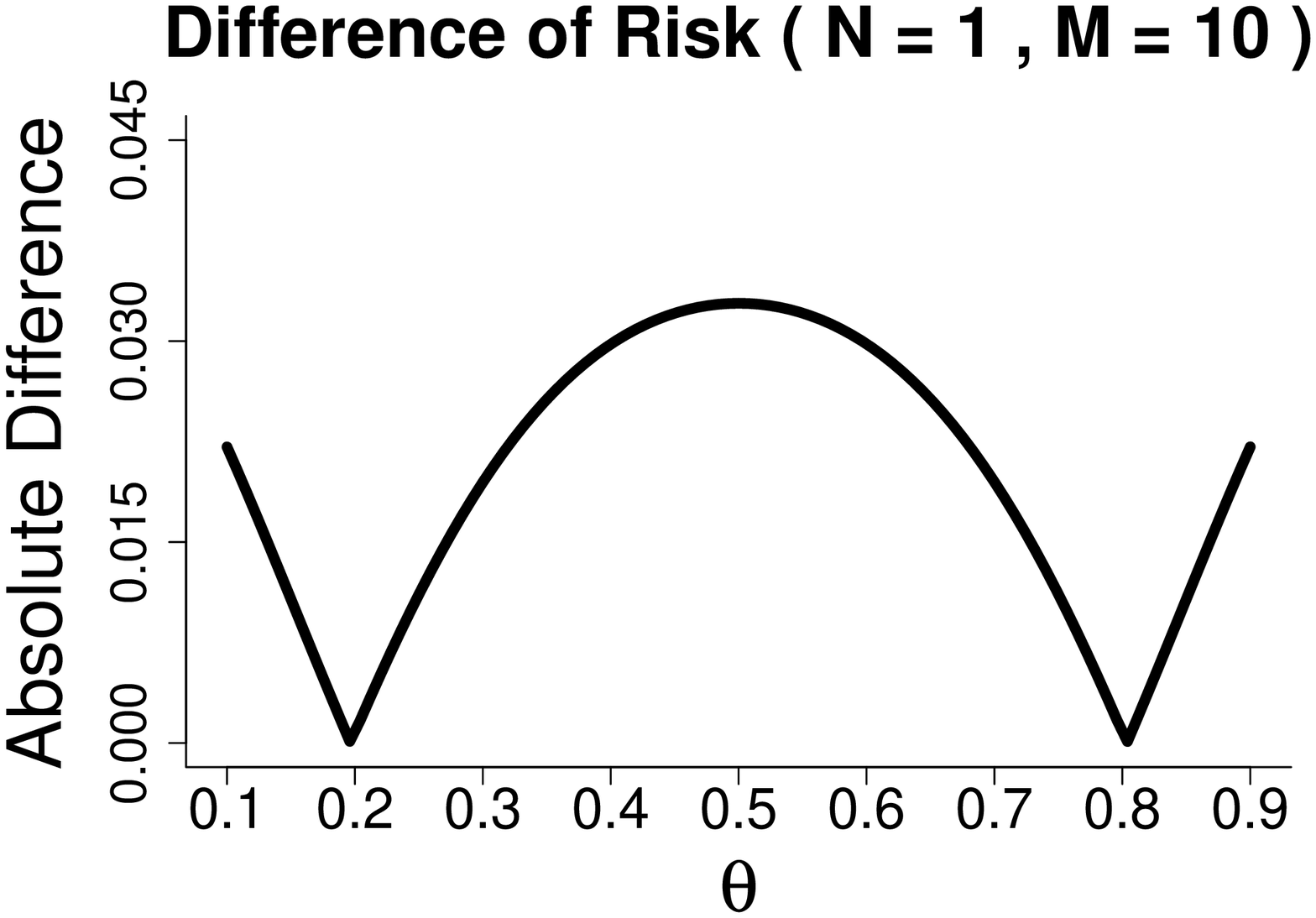}
\end{minipage}
\caption{Comparison of KL risk when $N=1$, $M=10$. The right panel shows the absolute difference of  KL risk between BPCNML3 and BPDLIP.}
\label{Binomial_N1M10}

\begin{minipage}{0.49\hsize}
	\includegraphics[width=7.5cm]{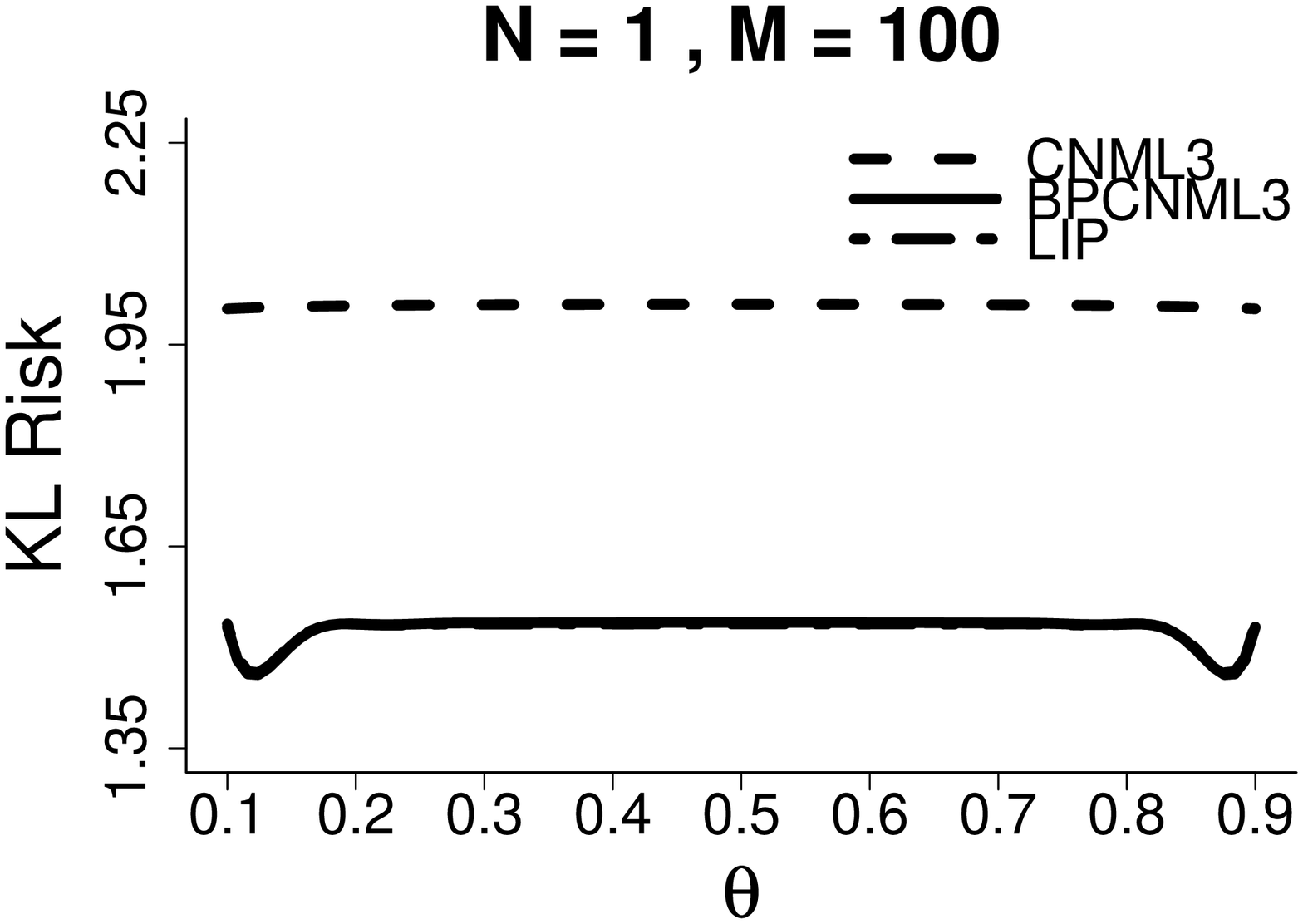}
\end{minipage}
\begin{minipage}{0.49\hsize}
	\includegraphics[width=7.5cm]{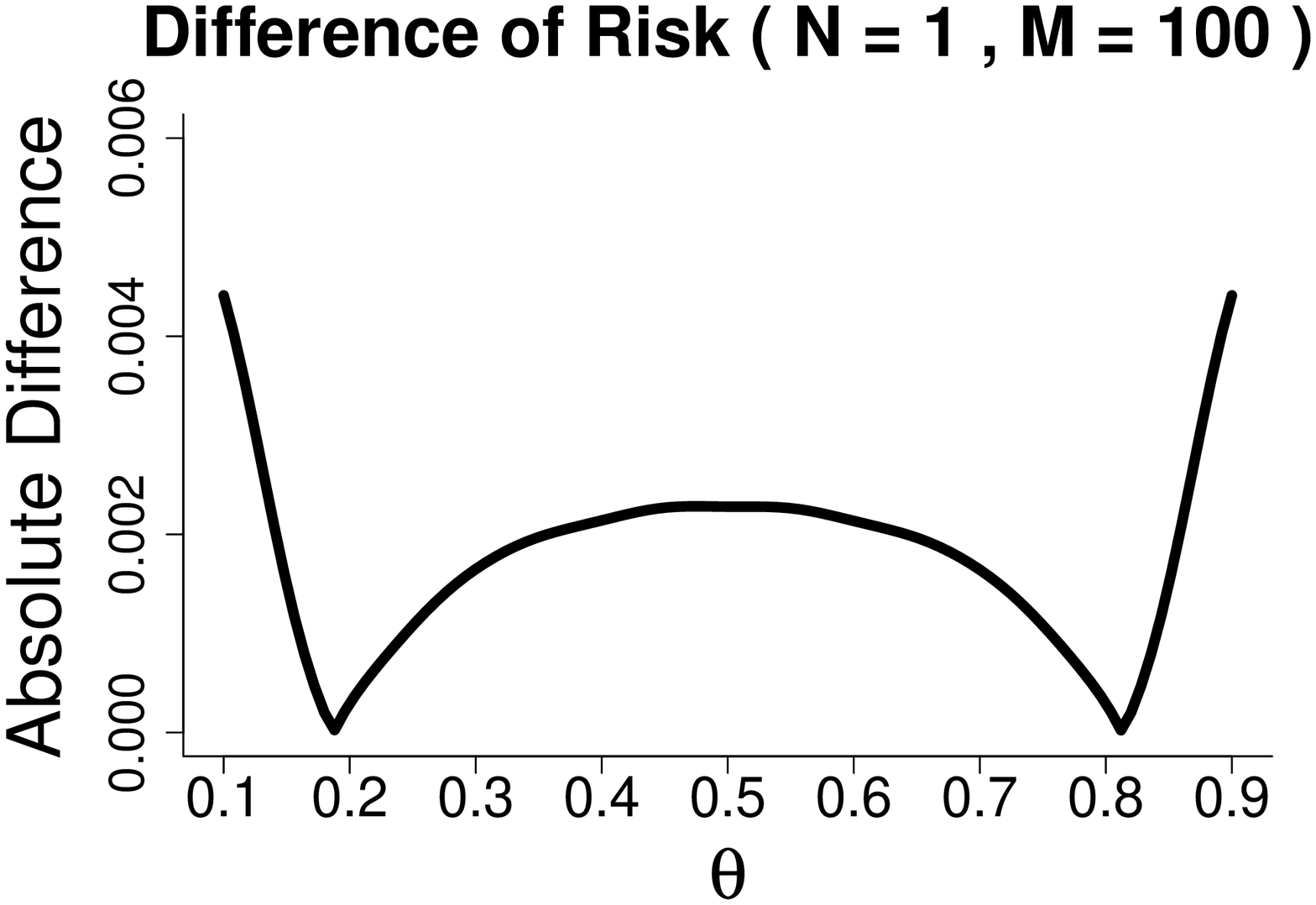}
\end{minipage}
\caption{Comparison of KL risk when $N=1$, $M=100$.  Since the KL risk of BPCNML3 is almost the same as that of BPDLIP, we plot the absolute difference of KL risk between BPCNML3 and BPDLIP in the right panel.}
\label{Binomial_N1M100}

\begin{minipage}{0.49\hsize}
	\includegraphics[width=7.5cm]{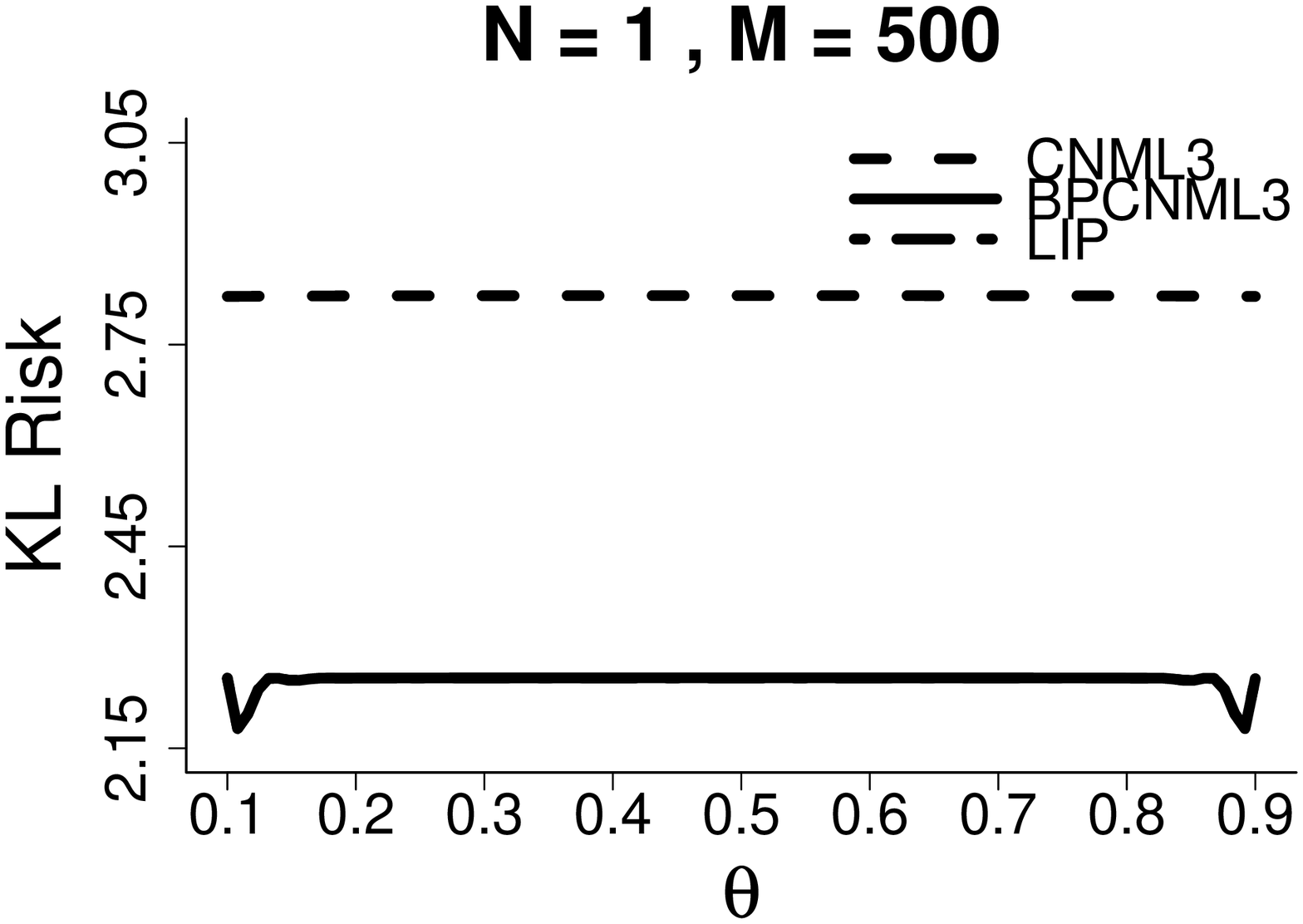}
\end{minipage}
\begin{minipage}{0.49\hsize}
	\includegraphics[width=7.5cm]{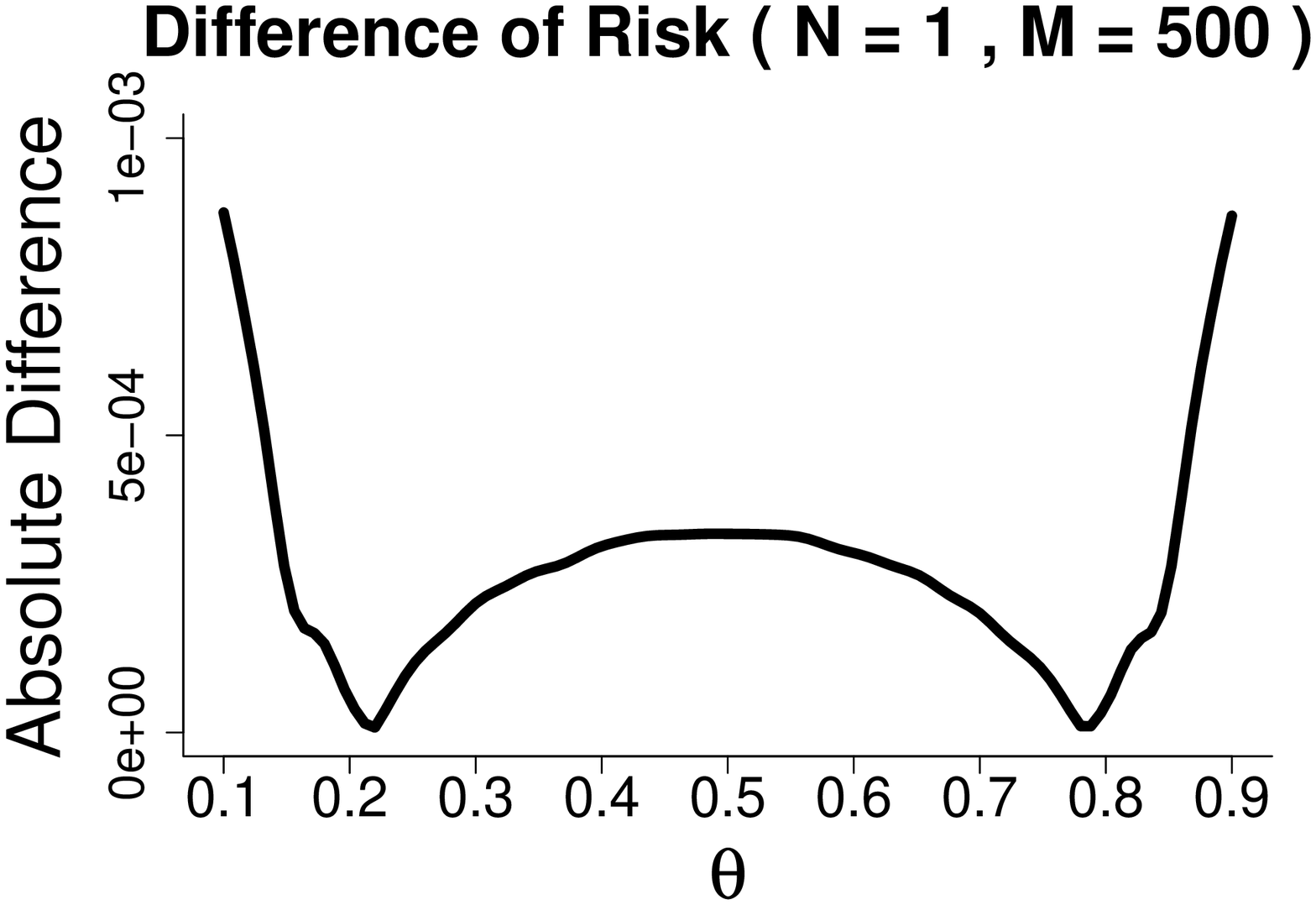}
\end{minipage}
\caption{Comparison of KL risk when $N=1$, $M=500$. Since the KL risk of BPCNML3 is almost the same as that of BPDLIP, we plot the absolute difference of KL risk between BPCNML3 and BPDLIP in the right panel.}
\label{Binomial_N1M500}
\end{figure}

Figure \ref{Binomial_N1M10}--\ref{Binomial_N1M500} show the result of comparison of KL risk among CNML3, BPCNML3, and Bayesian predictive densities based on LIP (simply abbreviated to BPDLIP) when $N=1$ and $M=10,100,500$. When $N=1$ and $M=100, 500$ (Figure \ref{Binomial_N1M100} and \ref{Binomial_N1M500}), KL risk of BPCNML3 is almost the same as that of BPDLIP. Therefore, we plot the absolute difference of KL risk between BPCNML3 and BPDLIP.

Implications from the figures are twofold. First, KL risk of BPCNML3 is much lower than that of CNML3. Notably, for submodels of the multinomial distributions, \cite{Komaki2011} showed that KL risk of the Bayes projection of predictive density $q$ is not larger than that of $q$. In addition, the amount of reduction increases as $M$ increases.
Second, we find that the difference of KL risk between BPCNML3 and BPDLIP goes to zero as $M$ increases. This finding implies that BPCNML3 is asymptotically identical to BPDLIP.

\section{Conclusion}\label{conclusion}
In this study, we discussed the relations between the Bayes projection of CNML3 (BPCNML3) and the Bayesian predictive density based on the LIP (BPDLIP). In Theorem \ref{asymptotic_evaluation_of_projection_dist_CNML3}, we proved that the sum of the Bayes projection divergence of CNML3 and the conditional mutual information is asymptotically constant. Roughly speaking, this result implies that the BPCNML3 is asymptotically identical to the BPDLIP. The numerical results in Section \ref{numerical_experiments} confirmed that the BPCNML3 is asymptotically identical to the BPDLIP for the binomial model. Under stronger conditions B1 and B2, we showed that the BPCNML3 exactly coincides with the BPDLIP in Theorem \ref{non-asymptotic_evaluation_of_projection_dist_CNML3}.

Our results shed light on the connection between CNML3 and LIPs. Although CNML2 has received the most attention among CNML distributions, we argue that CNML3, not CNML2, is more in line with the minimax KL risk approach and is the most important predictive density among CNML distributions.

Finally, we provide our future plans for this study. The plans are threefold. First, we will study the sufficient conditions for A5 and B2. These conditions are concerned with the conditional minimax regret-3. As reported in Remark \ref{remark_minimax_regret}, we believe that numerous regular statistical models satisfy these conditions. Second, we will address the boundary of the parameter space. In the same manner as \cite{ClarkeBarron1994}, we restricted the support set of the prior distributions that should be contained in the fixed compact set. Using the methods such as in \cite{XieBarron2000} or \cite{Komaki2012}, we may treat the boundary of the parameter space. Finally, we plan to study the predictive performance of the BPDLIP under the conditional regret-3. It is an interesting study because it parallels to the study of \cite{XieBarron2000}.

\appendix
\section{Proofs of Lemmas}
\subsection{Proof of Lemma 2}
\begin{proof}
We define several notations as follows:
\begin{align*}
I_{N,M}(\theta)&:=\int_{X^N\times X^M} \dd\mu(x^N,y^M)~ p(x^N, y^M|\theta) \log \frac{p(y^M|\theta)}{p(y^M|\hat{\theta}(x^N, y^M))}+\frac{d}{2},\\
R_{k}(\theta)&:=\int_{X^k} \dd\mu(z^k)~ p(z^k|\theta) \log \frac{p(z^k|\theta)}{p(z^k|\hat{\theta}(z^k))}+\frac{d}{2}.
\end{align*}
Note that since $p(x^N,y^M|\hat{\theta}(x^N,y^M))=p(x^N|\hat{\theta}(x^N,y^M))p(y^M|\hat{\theta}(x^N,y^M))>0$ for all $x^N$ and $y^M$,
\[
p(x^N|\hat{\theta}(x^N,y^M))>0,\quad p(y^M|\hat{\theta}(x^N,y^M))>0.
\]
Since $p(y^M|\hat{\theta}(y^M))\geq p(y^M|\hat{\theta}(x^N,y^M))$
\begin{align}\label{lower_bound}
R_{M}(\theta)\leq I_{N,M}(\theta),\quad \forall{\theta}\in K.
\end{align}
For $\theta\in K$, the integrand in the claim of Lemma \ref{lemma_approx_bias_of_loglikelihood} is decomposed as
\begin{align}\label{decomposition_of_integrand}
\frac{p(y^M|\theta)}{p(y^M|\hat{\theta}(x^N,y^M))}=\frac{p(x^N,y^M|\theta)}{p(x^N,y^M|\hat{\theta}(x^N,y^M))}\frac{p(x^N|\hat{\theta}(x^N,y^M))}{p(x^N|\theta)}.
\end{align}
By condition A1, for $(x^N,y^M)\in \{\hat{\theta}(x^N,y^M)\in K_{\delta}\}$
\begin{align*}
\log \frac{p(x^N|\hat{\theta}(x^N,y^M))}{p(x^N|\theta)} \leq |\hat{\theta}(x^N,y^M)-\theta|\sum_{i=1}^N L_{K_{\delta}}(x_i).
\end{align*}
In addition, by condition A3, for $(x^N,y^M)\in \{\hat{\theta}(x^N,y^M)\not\in K_{\delta}\}$
\begin{align*}
\log \frac{p(x^N|\hat{\theta}(x^N,y^M))}{p(x^N|\theta)}&=\sum_{i=1}^N \{\log p(x_i|\hat{\theta}(x^N,y^M))-\log p(x_i|\theta)\}\\
&\leq \sum_{i=1}^N \{\log p(x_i|\hat{\theta}(x_i))-\log p(x_i|\theta)\}\\
&\leq  \sum_{i=1}^N T_K(x_i).
\end{align*}
By the H\"{o}lder inequality, for all $\theta \in K$
\begin{align*}
\int_{X^N\times X^M} & \dd\mu(x^N,y^M) p(x^N,y^M|\theta) \log \frac{p(x^N|\hat{\theta}(x^N,y^M))}{p(x^N|\theta)} \\
\leq {}& \sup_{\theta\in K} \int_{\{\hat{\theta}(x^N,y^M)\in K_{\delta}\}}\dd\mu(x^N,y^M) p(x^N,y^M|\theta) |\hat{\theta}(x^N,y^M)-\theta|\sum_{i=1}^N L_{K_{\delta}}(x_i)\\
&+ \sup_{\theta\in K}\int_{\{\hat{\theta}(x^N,y^M)\not\in K_{\delta}\}} \dd\mu(x^N,y^M) p(x^N,y^M|\theta) \sum_{i=1}^N T_K(x_i)\\
\leq {}& \sup_{\theta\in K} \bigg\{\int_{X^N} \dd\mu(x^N) p(x^N|\theta) \bigg(\sum_{i=1}^N L_{K_{\delta}}(x_i)\bigg)^p\bigg\}^{\frac{1}{p}}\\
&\times  \sup_{\theta\in K} \bigg\{\int_{X^N\times X^M} \dd\mu(x^N,y^M)p(x^N,y^M|\theta) |\hat{\theta}(x^N,y^M)-\theta|^q\bigg\}^{\frac{1}{q}}\\
&+ \sup_{\theta\in K}\bigg\{P^{N+M}_{\theta}\bigg(\hat{\theta}(x^N,y^M)\not\in K_{\delta}\bigg)\bigg\}^{\frac{1}{s}}
\sup_{\theta\in K}\bigg\{ \int_{X^N} \dd\mu(x^N) p(x^N|\theta) \bigg(\sum_{i=1}^N T_K(x_i)\bigg)^r \bigg\}^{\frac{1}{r}},
\end{align*}
where $s$ satisfies $1/r+1/s=1$.
We denote the upper bound by $U_{M,N}$. Note that $U_{M,N}$ is nonnegative and does not depend on $\theta$. By conditions A1--A3 and Lemma \ref{sufficient_conditions_for_uniform_consistency}, we have
\begin{align}\label{upperbound_of_reminder}
\lim_{M\to\infty} U_{M,N} =0.
\end{align}
From \eqref{lower_bound} and \eqref{decomposition_of_integrand},
\begin{align*}
R_{M}(\theta) \leq I_{M,N}(\theta)\leq R_{M+N}(\theta)+U_{M,N}.
\end{align*}
Therefore, since $|I_{M,N}(\theta)|\leq \max\{|R_{M}(\theta)|, |R_{M+N}(\theta)|+U_{M,N}\}$
\begin{align*}
\sup_{\theta\in K}|I_{M,N}(\theta)|\leq& \max\big\{\sup_{\theta\in K}|R_{M}(\theta)|,~ \sup_{\theta\in K}|R_{M+N}(\theta)|+U_{M,N}\big\}\\
\leq &\sup_{\theta\in K}|R_M(\theta)|+\sup_{\theta\in K}|R_{M+N}(\theta)| + U_{M,N}.
\end{align*}
By condition A4 and \eqref{upperbound_of_reminder}, we have
\[
\lim_{M\to\infty}\sup_{\theta\in K}|I_{M,N}(\theta)|=0. 
\]
\end{proof}

\subsection{Proof of Lemma 4}
\begin{proof}
We define a family of polynomials with one variable $t$ as follows:
\[
f_{M,a}^{(2)}(t):=\sum_{i=0}^M
\begin{pmatrix}
M\\
i\\
\end{pmatrix}
(t+i)^i(M+a-t-i)^{M-i},
\]
where $M$ is a positive integer and $a$ is a real number. We also define $f_{0,a}^{(2)}(t)\equiv 1$. If we set $a=N$ and $x\in\{0,1,\ldots, N\}$, then $f_{M,N}(x)/(M+N)^M$ is the normalizing constant of CNML3 for the binomial distributions with observations $x^N$ satisfying $\sum_{i=1}^N x_i=x$. For any nonnegative integer $M$ and any real number $a$, we first prove that $f_{M,a}^{(2)}$ does not depend on the value of $t$, i.e., $f_{M,a}^{(2)}$ is a constant function.
	
It suffices to show that for any real number $a$,
\begin{align}\label{claim_induction}
\frac{\dd}{\dd t}f_{M,a}^{(2)}(t)=0,\quad \forall{t}\in\mathbf{R},
\end{align}
since $f_{M,a}^{(2)}$ is a polynomial in $t$. We prove this by mathematical induction with respect to $M$. For $M=0$, \eqref{claim_induction} is evident by the definition of $f_{0,a}^{(2)}$. Assume that \eqref{claim_induction} holds for $M=m$ and any $a\in\mathbf{R}$. From this assumption, $f_{m,a}^{(2)}$ is a constant function. Then,
\begin{align*}
\frac{\dd}{\dd t}f_{m+1,a}^{(2)}(t)={}&\frac{\dd}{\dd t}\bigg\{(m+1+a-t)^{m+1}+(t+m+1)^{m+1}\bigg\}\\
&+\sum_{i=1}^m
\begin{pmatrix}
m+1\\
i
\end{pmatrix}
\frac{\dd}{\dd t}(t+i)^i(m+1+a-t-i)^{m+1-i}\\
={}&-(m+1)(m+1+a-t)^m+(m+1)(t+m+1)^m\\
&+\sum_{i=1}^m
\begin{pmatrix}
m+1\\
i
\end{pmatrix}
i(t+i)^{i-1}(m+1+a-t-i)^{m+1-i}\\
&-\sum_{i=1}^m
\begin{pmatrix}
m+1\\
i
\end{pmatrix}
(m+1-i)(t+i)^i(m+1+a-t-i)^{m-i}.
\end{align*}
Since
\[
\begin{pmatrix}
m+1\\
i
\end{pmatrix}
i=(m+1)
\begin{pmatrix}
m\\
i-1
\end{pmatrix}
,\quad 
\begin{pmatrix}
m+1\\
i
\end{pmatrix}
(m+1-i)=(m+1)
\begin{pmatrix}
m\\
i
\end{pmatrix}
,
\]
hold, 
	
\begin{align*}
\frac{\dd}{\dd t}f_{m+1,a}^{(2)}(t)
={}&-(m+1)(m+1+a-t)^m+(m+1)(t+m+1)^m\\
&+(m+1)\sum_{i=1}^m
\begin{pmatrix}
m\\
i-1
\end{pmatrix}
(t+i)^{i-1}(m+1+a-t-i)^{m+1-i}\\
&-(m+1)\sum_{i=1}^m
\begin{pmatrix}
m\\
i
\end{pmatrix}
(t+i)^i(m+1+a-t-i)^{m-i}\\
={}&(m+1)(t+m+1)^m\\
&+(m+1)\sum_{i=0}^{m-1}
\begin{pmatrix}
m\\
i
\end{pmatrix}
(t+i+1)^{i}(m+a-t-i)^{m-i}\\
&-(m+1)\sum_{i=0}^m
\begin{pmatrix}
m\\
i
\end{pmatrix}
(t+i)^i(m+1+a-t-i)^{m-i}\\
={}&(m+1)\sum_{i=0}^{m}
\begin{pmatrix}
m\\
i
\end{pmatrix}
(t+i+1)^{i}(m+a-t-i)^{m-i}\\
&-(m+1)\sum_{i=0}^m
\begin{pmatrix}
m\\
i
\end{pmatrix}
(t+i)^i(m+1+a-t-i)^{m-i}\\
={}&(m+1)\{f_{m,a+1}(t+1)-f_{m,a+1}(t)\}.
\end{align*}
By the assumption of the induction, $f_{m,a+1}(t+1)-f_{m,a+1}(t)=0$. Therefore, 
\[
\frac{\dd}{\dd t}f_{m+1,a}^{(2)}(t)=0, 
\]
and \eqref{claim_induction} is verified for any $M$ and $a$. In addition, from this result, the claim of Lemma \ref{lemma_multinomial_normconstants_of_CNML3} is verified for the binomial distributions.
	
Next, we show that the claim holds for $(d+1)$-nomial distributions $(d\geq 2)$. We define a family of polynomials with $d$ variables $(t_1,\ldots, t_d)$ as follows:
\[
f_{M,a}^{(d+1)}(t_1,\ldots, t_d):=\sum_{\substack{0\leq i_1,\ldots, i_d\leq M,\\ \sum_{l=1}^d i_l =M}}
\frac{M!\prod_{l=1}^d (t_l+i_l)^{i_l}}{i_1!\ldots i_k!(M-\sum_{l=1}^ki_l)!}
\bigg(M+a-\sum_{l=1}^d (t_l+i_l)\bigg)^{M-\sum_{l=1}^d i_l}.
\]
where $M$ is a positive integer and $a$ is a real number. For the same reason discussed in the case of the binomial distributions, it suffices to show that $f_{M,a}^{(d+1)}$ is a constant function to verify that the normalizing constant of CNML3 for $(d+1)$-nomial distribution is independent of $x^N$. 
	
Note that $f_{M,a}^{(d+1)}$ is symmetric with respect to any permutation of variables, i.e., for any permutation $\sigma$,
\[
f_{M,a}^{(d+1)}(t_{\sigma(1)},\ldots, t_{\sigma(d)})=f_{M,a}^{(d+1)}(t_1,\ldots, t_d).
\] 
Therefore, it is sufficient to show that
\[
\frac{\partial}{\partial t_1} f_{M,a}^{(d+1)}(t_1,\ldots, t_d)=0,
\]
since $f_{M,a}^{(d+1)}$ is a symmetric polynomial in $(t_1,\ldots, t_d)$.
\begin{align*}
\frac{\partial}{\partial t_1}& f_{M,a}^{(d+1)}(t_1,\ldots, t_d)\\
&= \frac{\partial}{\partial t_1} \sum_{\substack{0\leq i_2,\ldots, i_d\leq M,\\ \sum_{l=2}^d i_l \leq M}}
\sum_{i_1=0}^{M-\sum_{l=2}^d i_l}
\frac{M!\prod_{l=1}^d (t_l+i_l)^{i_l}}{i_1!\ldots i_d!(M-\sum_{l=1}^di_l)!}\bigg(M+a-\sum_{l=1}^d (t_l+i_l)\bigg)^{M-\sum_{l=1}^d i_l}\\
&=
\frac{\partial}{\partial t_1} \sum_{\substack{0\leq i_2,\ldots, i_d\leq M,\\ \sum_{l=2}^d i_l \leq M}}
\frac{M!\prod_{l=2}^d (t_l+i_l)^{i_l}}{i_2!\ldots i_d!(M-\sum_{l=2}^d i_l)!}
\\
&~~\times
\sum_{i_1=0}^{M-\sum_{l=2}^d i_l}
\frac{(M-\sum_{l=2}^d i_l)!(t_1+i_1)^{i_1}}{i_1!(M-\sum_{l=2}^d i_l-i_1)!}
\bigg(M-\sum_{l=2}^d t_l+a-\sum_{l=2}^d i_l-t_1-i_1\bigg)^{M-\sum_{l=2}^d i_l-i_1}\\
&=
\sum_{\substack{0\leq i_2,\ldots, i_d\leq M,\\ \sum_{l=2}^d i_l \leq M}}
\frac{M!\prod_{l=2}^d (t_l+i_l)^{i_l}}{i_2!\ldots i_d!(M-\sum_{l=2}^d i_l)!}
\frac{\partial}{\partial t_1}f_{(M-\sum_{l=2}^d i_l), (a-\sum_{l=2}^d t_l)}^{(2)}(t_1)\\
&=0,
\end{align*}
since \eqref{claim_induction} holds.
\end{proof}

\subsection{Proof of Lemma 5}
\begin{proof}
Note that it is easy to verify that 
\begin{align}\label{expectation_of_likelihood_ratio_full_par}
\int_{X^k}\dd\mu(z^k)p(z^k|\theta)\log \frac{p(z^k|\overline{z^k})}{p(z^k|\theta)}=\frac{1}{2},
\end{align}
and thus, we omit the calculation. Let $A_k:=\{\hat{\theta}(z^k)\in (-a,a)\}$. Since \eqref{expectation_of_likelihood_ratio_full_par} holds and $\hat{\theta}(z^k)=\overline{z^k}$ for $z^k\in A_k$
\begin{align}
\frac{1}{2}&-\int_{X^k}\dd\mu(z^k)p(z^k|\theta)\log \frac{p(z^k|\hat{\theta}(z^k))}{p(z^k|\theta)} \nonumber \\
&=\int_{X^k}\dd\mu(z^k)p(z^k|\theta)\log \frac{p(z^k|\overline{z^k})}
{p(z^k|\theta)}-\int_{X^k}\dd\mu(z^k)p(z^k|\theta)\log \frac{p(z^k|\hat{\theta}(z^k))}{p(z^k|\theta)} \nonumber \\
&=\int_{A_k^c}\dd\mu(z^k)p(z^k|\theta)
\log \frac{p(z^k|\overline{z^k})}{p(z^k|\hat{\theta}(z^k))}.
\label{assertion_to_be_proved}
\end{align}
Since $p(z^k|\overline{z^k})\geq p(z^k|\hat{\theta}(z^k))$, \eqref{assertion_to_be_proved} is positive. Hence,
\begin{align}\label{bounded_above}
\bigg|\int_{X^k}\dd\mu(z^k)p(z^k|\theta)\log \frac{p(z^k|\hat{\theta}(z^k))}{p(z^k|\theta)}-\frac{1}{2}\bigg| = \int_{A_k^c}\dd\mu(z^k)p(z^k|\theta) \log \frac{p(z^k|\overline{z^k})}{p(z^k|\hat{\theta}(z^k))}.
\end{align}
The integrand in the right-hand side of \eqref{bounded_above}  is
\begin{align*}
\log \frac{p(z^k|\overline{z^k})}{p(z^k|\hat{\theta}(z^k))}=k(\overline{z^k}-\hat{\theta}(z^k))\overline{z^k}-\frac{k}{2}((\overline{z^k})^2-(\hat{\theta}(z^k))^2).
\end{align*}
Since the sample mean $\overline{z^k}$ is normally distributed with mean $\theta$ and variance $1/k$, 
\begin{align*}
\int_{A_k^c} &\dd\mu(z^k)p(z^k|\theta) \log \frac{p(z^k|\overline{z^k})}{p(z^k|\hat{\theta}(z^k))}\\
= {}&\int_a^{\infty} \dd u~\phi(u;\theta,1/k) \frac{k(u-a)^2}{2}+\int_{-\infty}^{-a} \dd u~\phi(u;\theta,1/k) \frac{k(u+a)^2}{2}\\
= {}&\int_{\sqrt{k}(a-\theta)}^{\infty} \dd u~\phi(u;0,1) \frac{(u-\sqrt{k}(\theta+a))^2}{2}
+\int_{-\infty}^{-\sqrt{k}(a+\theta)} \dd u~\phi(u;0,1) \frac{(u-\sqrt{k}(\theta-a))^2}{2}\\
\leq {}&\int_{\sqrt{k}\delta}^{\infty} \dd u~\phi(u;0,1) \frac{(u-\sqrt{k}(\theta+a))^2}{2}
+\int_{-\infty}^{-\sqrt{k}\delta} \dd u~\phi(u;0,1) \frac{(u-\sqrt{k}(\theta-a))^2}{2}\\
= {}&\int_{\sqrt{k}\delta}^{\infty} \dd u~\phi(u;0,1) \frac{(u-\sqrt{k}\theta-\sqrt{k}a)^2}{2}
+\int_{\sqrt{k}\delta}^{\infty} \dd u~\phi(u;0,1) \frac{(u+\sqrt{k}\theta-\sqrt{k}a)^2}{2}\\
\leq {}&\int_{\sqrt{k}\delta}^{\infty} \dd u~\phi(u;0,1) (u^2+k(a^2+\theta^2)).
\end{align*}
By the Lebesgue convergence theorem
\[
\lim_{k\to\infty} \int_{\sqrt{k}\delta}^{\infty} \dd u~\phi(u;0,1) u^2=0.
\]
In addition, since $u/(\sqrt{k}\delta)\geq 1$ for $u\geq \sqrt{k}\delta$,
\[
\int_{\sqrt{k}\delta}^{\infty} \dd u~\phi(u;0,1) k(a^2+\theta^2) 
\leq \int_{\sqrt{k}\delta}^{\infty} \dd u~\phi(u;0,1) \frac{2uka^2}{\sqrt{k}\delta} = \frac{2\sqrt{k}a^2}{\delta}\exp(-k\delta^2/2).
\]
Therefore,
\[
\lim_{k\to\infty}\sup_{\theta\in K}\int_{\sqrt{k}\delta}^{\infty} \dd u~\phi(u;0,1) k(a^2+\theta^2) =0.
\]
By \eqref{bounded_above}, the claim is verified.
\end{proof}

\subsection{Proof of Lemma 6}
\begin{proof}
First, we derive 
\[
\int_{\mathbf{R}^M}\dd y^M p(y^M|\hat{\theta}(x^N,y^M))
=1+\frac{M}{N}\int_{-\frac{aN}{\sqrt{M}}-\frac{1}{\sqrt{M}}\sum_{i=1}^Nx_i}^{\frac{aN}{\sqrt{M}}-\frac{1}{\sqrt{M}}\sum_{i=1}^Nx_i} \dd v~ \phi(v;0,1).
\]
From this equation, we find that the normalizing constant of CNML3 does depend on the value $\sum_{i=1}^N x_i$ (see Remark \ref{remark_minimax_regret}).
	
Let $u:=\sum_{i=1}^N x_i$ and let $v^M:=(v_1,\ldots,v_M)^{\top}$ satisfying
\[
v^M=H y^M,
\]
where $H$ is the $M\times M$ orthogonal matrix of the Helmert transformation. From the definition of $H$, we have
\[
\frac{1}{\sqrt{M}}\sum_{i=1}^M y_i=v_1,\quad \sum_{i=1}^M y_i^2=\sum_{i=1}^M v_i^2.
\]
MLE $\hat{\theta}(x^N,y^M)$ is represented in terms of $u$ and $v_1$:
\begin{align*}
\hat{\theta}(u,v_1):=\left\{
\begin{array}{ll}
-a, & \mathrm{if}~ \frac{u+\sqrt{M}v_1}{N+M}<-a, \\
a, &  \mathrm{if}~ \frac{u+\sqrt{M}v_1}{N+M}>a, \\
\frac{u+\sqrt{M}v_1}{N+M}, & \mathrm{otherwise}.
\end{array}
\right.
\end{align*}
Because $H$ is orthogonal,
\begin{align*}
\int_{\mathbf{R}^M}&\dd y^M p(y^M|\hat{\theta}(x^N,y^M))\\
={}&\int_{\mathbf{R}^M}\dd v^M \frac{1}{(2\pi)^{\frac{M}{2}}} \exp\bigg(-\frac{1}{2}\sum_{i=2}^Mv_i^2-\frac{1}{2}(v_1-\sqrt{M}\hat{\theta}(u,v_1))^2\bigg)\\
={}&\int_{\mathbf{R}}\dd v_1\frac{1}{\sqrt{2\pi}}\exp\bigg(-\frac{1}{2}(v_1-\sqrt{M}\hat{\theta}(u,v_1))^2\bigg)\\
={}&\int_{-\infty}^{-\frac{a(N+M)}{\sqrt{M}}-\frac{u}{\sqrt{M}}} \dd v_1 \frac{\exp\big(-\frac{(v_1+\sqrt{M}a)^2}{2}\big)}{\sqrt{2\pi}}
+\int_{\frac{a(N+M)}{\sqrt{M}}-\frac{u}{\sqrt{M}}}^{\infty} \dd v_1 \frac{\exp\big(-\frac{(v_1-\sqrt{M}a)^2}{2}\big)}{\sqrt{2\pi}}\\
&+\int_{-\frac{a(N+M)}{\sqrt{M}}-\frac{u}{\sqrt{M}}}^{\frac{a(N+M)}{\sqrt{M}}-\frac{u}{\sqrt{M}}} \dd v_1 \frac{1}{{\sqrt{2\pi}}}\exp\bigg(-\frac{(Nv_1-\sqrt{M}u)^2}{2(N+M)^2}\bigg)\\
={}&1+\frac{M}{N}\int_{-\frac{aN}{\sqrt{M}}-\frac{u}{\sqrt{M}}}^{\frac{aN}{\sqrt{M}}-\frac{u}{\sqrt{M}}} \dd v_1 \phi(v_1;0,1).
\end{align*}
Next, we verify that condition A5 is satisfied.
\begin{align*}
\frac{M}{N}\int_{-\frac{aN}{\sqrt{M}}-\frac{u}{\sqrt{M}}}^{\frac{aN}{\sqrt{M}}-\frac{u}{\sqrt{M}}} \dd v_1~ \phi(v_1;0,1)
&=\frac{M}{N}\int_{-\frac{aN}{\sqrt{M}}}^{\frac{aN}{\sqrt{M}}} \dd v_1~ \phi(v_1;0,1)\exp\bigg(\frac{u}{\sqrt{M}}v_1-\frac{u^2}{2M}\bigg)\\
&\leq \exp\bigg(\frac{aN|u|}{M}\bigg)\frac{M}{N}\int_{-\frac{aN}{\sqrt{M}}}^{\frac{aN}{\sqrt{M}}} \dd v_1~ \phi(v_1;0,1).
\end{align*}
Since $\exp(aN|u|/M)\geq 1$,
\[
1+\frac{M}{N}\int_{-\frac{aN}{\sqrt{M}}-\frac{u}{\sqrt{M}}}^{\frac{aN}{\sqrt{M}}-\frac{u}{\sqrt{M}}} \dd v_1~ \phi(v_1;0,1)
\leq \exp\bigg(\frac{aN|u|}{M}\bigg)\bigg(1+\frac{M}{N}\int_{-\frac{aN}{\sqrt{M}}}^{\frac{aN}{\sqrt{M}}} \dd v_1~ \phi(v_1;0,1)\bigg).
\]
Similarly, we find the lower bound as follows:
\[
1+\frac{M}{N}\int_{-\frac{aN}{\sqrt{M}}-\frac{u}{\sqrt{M}}}^{\frac{aN}{\sqrt{M}}-\frac{u}{\sqrt{M}}} \dd v_1~ \phi(v_1;0,1)
\geq \exp\bigg(-\frac{aN|u|}{M}-\frac{u^2}{2M}\bigg)\bigg(1+\frac{M}{N}\int_{-\frac{aN}{\sqrt{M}}}^{\frac{aN}{\sqrt{M}}} \dd v_1~ \phi(v_1;0,1)\bigg).
\]
Therefore,
\begin{align*}
\bigg|\log\int_{\mathbf{R}^M}\dd y^M p(y^M|\hat{\theta}(x^N,y^M))-\log\bigg(1+\frac{M}{N}\int_{-\frac{aN}{\sqrt{M}}}^{\frac{aN}{\sqrt{M}}} \dd v_1~ \phi(v_1;0,1)\bigg)\bigg|\leq \frac{aN|u|}{M}+\frac{u^2}{2M}.
\end{align*}
From this inequality, if we set 
\[
C^{N,M}=\log\bigg(1+\frac{M}{N}\int_{-\frac{aN}{\sqrt{M}}}^{\frac{aN}{\sqrt{M}}} \dd v_1~ \phi(v_1;0,1)\bigg),
\]
then the claim is verified because $\E_{\theta}[|u|]$ and $\E_{\theta}[u^2]$ is uniformly bounded in $\theta\in K$ and do not depend on $M$.
\end{proof}

\bibliography{ref_nml_metr.bib}
\end{document}